\DeclareSymbolFontAlphabet{\mathrsfs}{rsfs}
\DeclareMathSymbol{\emptyset}{\mathord}{symbols}{59}
                    \DeclareMathSymbol{\boxtimes}{\mathbin}{AMSa}{"02}
\newcommand\red{\color{red}}
\newcommand\blue{\color{blue}}
\let\@secnumfont\bfseries
\def\section{\@startsection{section}{1}%
  \z@{4\linespacing\@plus\linespacing}{\linespacing}%
  {\bfseries\centering}}
\def\introsection{\@startsection{section}{1}%
  \z@{3\linespacing\@plus\linespacing}{\linespacing}%
  {\bfseries\centering}}
\def\subsection{\@startsection{subsection}{2}%
   \z@{1.25\linespacing\@plus.7\linespacing}{.5\linespacing}%
   {\normalfont\bfseries}}
\def\subsectionsinline{\def\subsection{\@startsection{subsection}{2}%
  \z@{1\linespacing\@plus.7\linespacing}{-.5em}%
  {\normalfont\bfseries}}}
\newenvironment{qedequation}{%
   \pushQED{\qed}%
   \incr@eqnum
   \mathdisplay@push
   \st@rredfalse \global\@eqnswtrue
   \mathdisplay{equation}%
}{%
   \endmathdisplay{equation}%
   \mathdisplay@pop
   \ignorespacesafterend
   \popQED\@endpefalse
}
\theoremstyle{definition}
\newtheorem*{definition*}{Definition}
\newtheorem*{example*}{Example}
\newtheorem*{problem*}{\color{blue}Problem}
\newtheorem*{exercise*}{Exercise}
\newtheorem*{question*}{\color{blue}Question}
\newtheorem*{project*}{\color{blue}Project}
\newtheorem*{construction*}{Construction}
\theoremstyle{remark}
\newtheorem{remark}[equation]{Remark}
\newtheorem*{note*}{Note}
\newtheorem*{notation*}{Notation}
\newtheorem*{remark*}{Remark}
\newtheorem*{data*}{Data}
\theoremstyle{plain}
\newtheorem{theorem}[equation]{Theorem}
\newtheorem{corollary}[equation]{Corollary}
\newtheorem{proposition}[equation]{Proposition}
\newtheorem*{theorem*}{Theorem}
\newtheorem*{corollary*}{Corollary}
\newtheorem*{lemma*}{Lemma}
\newtheorem*{proposition*}{Proposition}
\newtheorem*{conjecture*}{Conjecture}
\newtheorem*{claim*}{Claim}
\newtheorem*{proposal*}{Proposal}
\newtheorem*{conclusion*}{Conclusion}
\newtheorem*{hypothesis*}{Hypothesis}
\newtheorem*{assumption*}{Assumption}
\newenvironment{proof*}[1][\proofname]{
  \begin{proof}[#1]}{  
\end{proof}}
\numberwithin{equation}{section}
\definecolor{refkey}{rgb}{0,.6,.4}
\renewcommand{\:}{\colon}
\newcommand{\CC}{{\mathbb C}}
\newcommand{\CP}{{\mathbb C\mathbb P}}
\DeclareMathOperator{\id}{id}
\DeclareMathOperator{\Map}{Map}
\DeclareMathOperator{\pt}{pt}
\newcommand{\RR}{{\mathbb R}}
\newcommand{\TT}{\mathbb T}
\DeclareMathOperator{\Spin}{Spin}
\newcommand{\ZZ}{{\mathbb Z}}
\newcommand{\chiup}{\raise.5ex\hbox{$\chi$}}
\newcommand{\cir}{S^1}
\newcommand{\inv}{^{-1}}
\DeclareRobustCommand{\mstrut}{^{\vphantom{1*\prime y\vee M}}}
\newcommand{\res}[1]{\negmedspace\bigm|\mstrut_{#1}}
\newcommand{\temsquare}{\raise3.5pt\hbox{\boxed{ }}}
\newcommand{\zmod}[1]{\ZZ/#1\ZZ}
\newcommand{\zt}{\zmod2}
\DeclareMathOperator{\SO}{SO}
\let\O\relax
\DeclareMathOperator{\O}{O}
\DeclareMathOperator{\SL}{SL}
\renewcommand{\cir}{\ensuremath{S^1}}
\let\O\relax\DeclareMathOperator{\O}{O}
\definecolor{refkey}{rgb}{0,.8,.2}\definecolor{labelkey}{rgb}{1,0,0} 
\DeclareMathOperator{\Pin}{Pin}
\DeclareMathOperator{\Ad}{Ad}
\DeclareMathOperator{\Bord}{Bord}
\DeclareMathOperator{\Line}{Line}
\DeclareMathOperator{\Li}{Li}
\DeclareMathOperator{\Sect}{Sect}
\DeclareMathOperator{\curv}{curv}
\newcommand{\BNGG}{B^{}_{\nabla }G}
\newcommand{\BNG}{B^{}_{\nabla }\Cx}
\newcommand{\CZt}{\CC/\Zt}
\newcommand{\CZ}{\CC/\ZZ}
\newcommand{\Cx}{\CC^\times}
\newcommand{\ENGG}{E^{}_{\nabla }G}
\newcommand{\ENG}{E^{}_{\nabla }\Cx}
\newcommand{\F}[2]{\rF(#1;#2)}
\newcommand{\LC}{\Line\mstrut _\CC}
\newcommand{\MTp}{\sM _T'}
\newcommand{\MT}{\sM\mstrut _T}
\newcommand{\MXp}{\sM _X'}
\newcommand{\MX}{\sM\mstrut _X}
\newcommand{\OC}[2]{\Omega^{#1}_{#2}(\CC)}
\newcommand{\RZ}{\RR/\ZZ}
\newcommand{\Tu}{\Theta ^{\textnormal{univ}}}
\newcommand{\Zo}{\ZZ(1)}
\newcommand{\Zt}{\ZZ(2)}
\newcommand{\asp}[1]{\alpha_{\textnormal{spin}}(#1)}
\newcommand{\brdn}{\Bord_{\langle 2,3  \rangle}(\SO_3\times (\Cx)^\nabla )}
\newcommand{\brds}{\Bord _{\langle 2,3  \rangle}(\Spin_3\times (\Cx)^\nabla )}
\newcommand{\cco}{\widecheck{c}_1}
\newcommand{\ce}{\widecheck{\eta}}
\newcommand{\cla}{\widecheck{\lambda }}
\newcommand{\conn}[1]{\sA_{#1}}
\newcommand{\co}{\widecheck{\omega }}
\newcommand{\dEs}{\widecheck{E}\mstrut _{\CC}}
\newcommand{\dE}{\widecheck{E}_{\CC}}
\newcommand{\dH}{\widecheck{H}_{\CC}}
\newcommand{\da}{\dot{\alpha }}
\newcommand{\hGe}{\widehat{\Gamma }_\epsilon }
\newcommand{\hMTp}{\widehat{\sM} _T'}
\newcommand{\hMT}{\widehat{\sM}\mstrut _T}
\newcommand{\hMXp}{\widehat{\sM} _X'}
\newcommand{\hMX}{\widehat{\sM}\mstrut _X}
\newcommand{\har}{\widehat{r}}
\newcommand{\he}{\hat{\eta }}
\newcommand{\hr}{\hat{\rho }}
\newcommand{\hsL}{\widehat{\sL}}
\newcommand{\htao}{\widehat{\tau}_0}
\newcommand{\hto}{\hat{t}_0}
\newcommand{\rF}{\mathrsfs{F}}
\newcommand{\rS}{\mathrsfs{S}}
\newcommand{\sA}{\mathcal{A}}
\newcommand{\sF}{\mathscr{F}}
\newcommand{\sLC}{s\!\LC}
\newcommand{\sL}{\mathscr{L}}
\newcommand{\sM}{\mathscr{M}}
\newcommand{\sP}{\mathscr{P}}
\newcommand{\sQ}{\mathscr{Q}}
\newcommand{\sT}{\mathscr{T}}
\newcommand{\sY}{\mathscr{Y}}
\newcommand{\scx}{\cir\times \Cx}
\newcommand{\sqmo}{\sqrt{-1}}
\newcommand{\ssm}{\text{\scriptsize $\mathrsfs{S}$}}
\newcommand{\tP}{\widetilde{P}}
\newcommand{\tT}{\widetilde{\Theta }}
\newcommand{\tatp}{\tau \mstrut _{t'}}
\newcommand{\tat}{\tau \mstrut _t}
\newcommand{\thS}[2]{\rS(#1;#2)}
\newcommand{\tpii}{\frac{1}{2\pi \sqmo}}
\newcommand{\tpi}{2\pi \sqmo}
\newcommand{\tps}{\tilde{\psi}}
\newcommand{\tsm}{\text{\scriptsize $\mathrsfs{S}$}}
\newcommand{\wsp}[1]{\omega_{\textnormal{spin}}(#1)}
\begin{document}

\abovedisplayskip18pt plus4.5pt minus9pt
\belowdisplayskip \abovedisplayskip
\abovedisplayshortskip0pt plus4.5pt
\belowdisplayshortskip10.5pt plus4.5pt minus6pt
\baselineskip=15 truept
\marginparwidth=55pt

\makeatletter
\renewcommand{\tocsection}[3]{%
  \indentlabel{\@ifempty{#2}{\hskip1.5em}{\ignorespaces#1 #2.\;\;}}#3}
\renewcommand{\tocsubsection}[3]{%
  \indentlabel{\@ifempty{#2}{\hskip 2.5em}{\hskip 2.5em\ignorespaces#1%
    #2.\;\;}}#3} 
\def\@makefnmark{%
  \leavevmode
  \raise.9ex\hbox{\fontsize\sf@size\z@\normalfont\tiny\@thefnmark}} 
\def\multfoot{\textsuperscript{\tiny\color{red},}}
\def\footref#1{$\textsuperscript{\tiny\ref{#1}}$}
\makeatother

\setcounter{tocdepth}{2}




 \title[\today]{The dilogarithm and abelian Chern-Simons} 
 \author[D. S. Freed]{Daniel S.~Freed}
 \thanks{This material is based upon work supported by the National Science
Foundation under Grant Numbers DMS-1611957 and DMS-1711692.  We also thank
the Aspen Center for Physics, which is supported by National Science
Foundation grant PHY-1607611, as well as the Mathematical Sciences Research
Institute, which is supported by National Science Foundation Grant 1440140,
both of which provided support to the authors while this work was completed.}
 \address{Department of Mathematics \\ University of Texas \\ Austin, TX
78712} 
 \email{dafr@math.utexas.edu}
 \author[A. Neitzke]{Andrew Neitzke}
 \address{Department of Mathematics \\ Yale University \\ 10 Hillhouse Avenue\\
New Haven, CT 06511} 
 \email{neitzke@math.utexas.edu}
 \date{January 21, 2021}
 \begin{abstract} 
 We construct the (enhanced Rogers) dilogarithm function from the spin
Chern-Simons invariant of $\Cx$-connections.  This leads to geometric proofs
of basic dilogarithm identities and a geometric context for other properties,
such as the branching structure.
 \end{abstract}
\maketitle



{\small
\def\reftext{References}
\renewcommand{\tocsection}[3]{%
  \begingroup 
   \def\tmp{#3}%
   \ifx\tmp\reftext
  \indentlabel{\phantom{1}\;\;} #3%
  \else\indentlabel{\ignorespaces#1 #2.\;\;}#3%
  \fi\endgroup}
\tableofcontents
}

   \section{Introduction}\label{sec:1}
 
Investigations of the dilogarithm function, in its simplest form defined via
the power series 
  \begin{equation}\label{eq:76}
     \Li_2(z) = \sum\limits_{n=1}^{\infty} \frac{z^n}{n^2}, 
  \end{equation}
date back to Leibniz, Bernoulli, and Euler; one early reference is the 1809
essay of William Spence~\cite{S}.  In recent times the dilogarithm makes
appearances in hyperbolic geometry, algebraic $K$-theory, conformal field
theory, and beyond.  It and its relatives are the subject of survey articles,
such as~\cite{G,K,Z} which provide a wealth of references to the literature.
In their study of the scissors congruence problem, Dupont-Sah~\cite{DS} and
subsequently Dupont~\cite{D} relate a variant of the dilogarithm
function~\eqref{eq:76} to a Chern-Cheeger-Simons~\cite{CS,ChS} characteristic
class of flat principal $\SL_2\CC$-bundles.  As part of our study~\cite{FN}
of ``stratified abelianization'' of flat $\SL_2\CC$-connections on
3-manifolds, we discovered a geometric construction of this \emph{enhanced
Rogers dilogarithm} using \emph{abelian} Chern-Simons theory of flat
$\Cx$-connections on a 2-dimensional torus.  In this paper we present our
construction, and we use it to give geometric proofs of the basic dilogarithm
identities.
 
We begin in~\S\ref{sec:2} with an exposition of Chern-Simons invariants of
$\Cx$-connections.  Our work uses a square root for spin manifolds, which we
outline in~\S\ref{sec:3} and develop with proofs in Appendix~\ref{sec:6}.
The construction of the dilogarithm is carried out in~\S\ref{sec:4} and the
identities are proved in~\S\ref{sec:5}.  An inspiration for our construction,
independent of stratified abelianization, is a heuristic computation
motivated by topological string theory, as we explain in
Appendix~\ref{sec:7}.

We thank Ian Agol, Sasha Goncharov, and Charlie Reid for comments and
discussion.

   \section{Classical Chern-Simons theory}\label{sec:2}

The Chern-Simons invariant is defined for pairs~$(G,\lambda )$ consisting of
a real Lie group~$G$ with finitely many components and a class $\lambda \in
H^4(BG;\ZZ)$.  In this section we focus on the special case $(\Cx,{c_1}^2)$,
where $c_1\in H^2(B\Cx;\ZZ)$ is the universal first Chern class.  Here we
briefly indicate the constructions for trivializable $\Cx$-bundles.  In
Appendix~\ref{sec:6} we provide a general construction based on differential
cohomology which applies to all principal $\Cx$-bundles.  The treatment in
Appendix~\ref{sec:6} emphasizes the spin refinement of the Chern-Simons
invariant, though it can be adapted to the non-spin case discussed in this
section.  The references~\cite{F1,F2} contain more exposition and details.
The basic properties of classical Chern-Simons theory are compactly expressed
in the language of field theory (Theorem~\ref{thm:s3}).
 
Let $W$~be a closed oriented 4-manifold and $P\to W$ a principal
$\Cx$-bundle.  Then 
  \begin{equation}\label{eq:1}
     \left\langle {c_1}(P)^2,[W] \right\rangle\; \in \ZZ 
  \end{equation}
is a \emph{primary topological invariant} of $P\to W$.
Chern-Simons~\cite{CS} construct a \emph{secondary geometric invariant} of
$\Cx$-bundles with connection as follows.  Let $M$~be an arbitrary smooth
manifold and $\pi \:P\to M$ a principal $\Cx$-bundle.  Let $\Theta \in \OC
1P$ be a connection and $\Omega \in \OC 2M$ its curvature, i.e., $\pi ^*\Omega
=d\Theta $.  Define 
  \begin{align}
      \omega(\Theta ) &=-\frac{1}{4\pi ^2}\, \Omega \wedge \Omega  \; \in \OC4M \label{eq:2}\\ 
      \alpha(\Theta )  &=-\frac{1}{4\pi ^2}\, \Theta \wedge \Omega \; \in \OC3P. \label{eq:3}
  \end{align}
Then $d\omega =0$ and $d\alpha =\pi ^*\omega $.  These are the
\emph{Chern-Weil} and \emph{Chern-Simons} forms, respectively.  For $M=W$
a closed oriented 4-manifold, we have 
  \begin{equation}\label{eq:4}
     \int_{W}\omega(\Theta ) =\left\langle c_1(P)^2,[W] \right\rangle ; 
  \end{equation}
in particular, the left-hand side is independent of the connection~$\Theta
$.  
 
Suppose $M=X$ is a closed oriented 3-manifold and $\pi \:P\to X$ is
trivializable.  For each section~$s$ of~$\pi $ define 
  \begin{equation}\label{eq:5}
     \Gamma (\Theta ,s) = \int_{X} s^*\alpha (\Theta ). 
  \end{equation}
If $s'=s\cdot g$ for $g\:X\to\Cx$, then 
  \begin{equation}\label{eq:6}
     (s\cdot g)^*\alpha (\Theta ) = s^*\alpha (\Theta ) - \frac{1}{4\pi
     ^2}\,d\!\left( s^*\Theta \wedge \frac{dg}{g} \right).
  \end{equation}
Therefore, by Stokes' theorem $\Gamma (\Theta ,s)$~is independent of~$s$.
The Chern-Simons invariant is
  \begin{equation}\label{eq:8}
     \F X{\Theta } = \exp\left( \tpi\,\Gamma (\Theta ,s) \right) \; \in
     \Cx.
  \end{equation}

  \begin{remark}[]\label{thm:1}
 An alternative definition uses the fact that $P\to X$ can be written as the
boundary of a principal $\Cx$-bundle $\tP\to W$ over a compact oriented
4-manifold~$W$ with $\partial W=X$.  The connection~$\Theta $ extends to a
connection~$\tT$ on $\tP\to W$, and by Stokes' theorem the right hand side
of~\eqref{eq:5} equals
  \begin{equation}\label{eq:7}
     \int_{W}\omega (\tT). 
  \end{equation}
This definition works for any (possibly nontrivializable) $P\to X$ with
connection, but \eqref{eq:7}~is only independent of the choice of the
extension $\tP\to W$ modulo integers; see~\eqref{eq:4}.
  \end{remark}

If $X$~is a compact oriented 3-manifold with $\partial X=Y$, then $\Gamma
(\Theta ,s)$~is not independent of~$s$; the exact term in~\eqref{eq:6}
produces a boundary correction.  The dependence of $\exp\left( \tpi\,\Gamma
(\Theta ,s) \right)$ on~$s$ is encoded as follows.  Set $\rho \:Q=P\res
Y\to Y$ and $\eta =\Theta \res Q$.  Under our hypothesis the space
$\Sect(\rho )$ of sections of $\rho \:Q\to Y$ is nonempty; it is a torsor
over $\Map(Y,\Cx)$.  Define the complex line
  \begin{multline}\label{eq:9}
     \F Y{\eta } = \biggl\{ f\:\Sect(\rho )\to\CC : f(t\cdot h) = \exp\left(
     -\frac{\sqmo}{2\pi }\int_{Y}t^*\eta \wedge \frac{dh}{h}\,
     \right)f(t)
     \\[1pc]  \textnormal{for all }t\in \Sect(\rho ),\;  h\in\Map(Y,\Cx)\biggr\}.
  \end{multline}
Then 
  \begin{equation}\label{eq:10}
     \F X{\Theta } := \exp\left( \tpi\,\Gamma (\Theta ,-) \right)\; \in \F
     Y{\eta } 
  \end{equation}
is a well-defined nonzero element of the line~$\F Y{\eta }$.

  \begin{remark}[]\label{thm:2}
 By construction, a trivialization~$t$ of $\rho \:Q\to Y$ induces a
trivialization $\CC\xrightarrow{\;\cong \;}\F Y{\eta }$, i.e., a nonzero
element $\tat\in \F Y{\eta }$. 
  \end{remark}

The formal properties are best summarized in field theory language.  Let
$\brdn$ denote the category\footnote{For many reasons, among others to
construct smooth gluings, the objects are germs of 3-dimensional data over
the 2-dimensional data; see~\cite{Se}.  Our language in the text is a
simplification.  Also, it is crucial to extend to parametrized families of
$\Cx$-connections, for example to capture the variation formula for
Chern-Simons invariants, as we recount below.  See~\cite{ST} for field
theories sheafified over smooth manifolds.\label{foot:1}} whose objects are
closed oriented 2-manifolds~$Y$ equipped with a $\Cx$-connection~$\Theta _Y$;
a morphism $(Y_0,\Theta _0)\to (Y_1,\Theta _1)$ is a compact oriented
3-manifold~$X$ equipped with a $\Cx$-connection~$\Theta _X$, a diffeomorphism
$-Y_0\amalg Y_1\xrightarrow{\;\cong \;}\partial X$ (an oriented manifold~$M$
has a canonical reflection~$-M$ with the opposite orientation), and a
compatible isomorphism $\Theta _0\amalg \Theta _1\xrightarrow{\;\cong
\;}\partial \Theta _X$.  Composition of morphisms is gluing of bordisms, and
disjoint union provides a symmetric monoidal structure.  Let $\LC$ denote the
groupoid whose objects are 1-dimensional complex vector spaces and morphisms
are invertible linear maps; tensor product provides a symmetric monoidal
structure.

  \begin{theorem}[]\label{thm:s3}
 The exponentiated Chern-Simons invariant is a symmetric monoidal functor 
  \begin{qedequation}\label{eq:s16}
     \rF \:\brdn\longrightarrow \LC. \qedhere
  \end{qedequation}
  \end{theorem}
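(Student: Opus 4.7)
The plan is to verify the three structural ingredients of a symmetric monoidal functor: (a)~$\rF$ assigns to each object $(Y,\eta)$ a 1-dimensional complex vector space, namely $\F Y\eta$ from~\eqref{eq:9}; (b)~$\rF$ assigns to each bordism $(X,\Theta_X)\:(Y_0,\eta_0)\to(Y_1,\eta_1)$ a linear map $\F{Y_0}{\eta_0}\to\F{Y_1}{\eta_1}$, via the element $\exp(\tpi\,\Gamma(\Theta_X,-))$ of the appropriate line; (c)~$\rF$ respects composition (gluing) and disjoint union. Diffeomorphism invariance is automatic because every ingredient ($\alpha$, $\omega$, pullback, integration) depends only on the underlying smooth geometry.

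For~(a), since $\Sect(\rho)$ is a nonempty $\Map(Y,\Cx)$-torsor, any $f\in\F Y\eta$ is determined by its value on a single section, so the line has dimension at most one. Existence reduces to the multiplicative cocycle identity $c(t,hh')=c(t,h)\,c(t\cdot h,h')$, where $c(t,h):=\exp\bigl(-\tfrac{\sqmo}{2\pi}\int_Y t^*\eta\wedge\tfrac{dh}{h}\bigr)$. Substituting the connection transformation $(t\cdot h)^*\eta = t^*\eta + \tfrac{dh}{h}$ produces a discrepancy factor $\exp\bigl(-\tfrac{\sqmo}{2\pi}\int_Y \tfrac{dh}{h}\wedge\tfrac{dh'}{h'}\bigr)$, which equals~$1$ because $\tfrac{1}{2\pi\sqmo}\tfrac{dh}{h}$ has integer periods on the closed surface~$Y$.

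For~(b), applying identity~\eqref{eq:6} on~$X$ and integrating over $\partial X = -Y_0\amalg Y_1$ via Stokes' theorem yields exactly the functional equation in~\eqref{eq:9}, with the sign on $Y_0$ encoding the dualization $\F{-Y_0}{\eta_0}\cong \F{Y_0}{\eta_0}^*$ (which will itself be one of the coherences of the symmetric monoidal structure). Hence $\exp\bigl(\tpi\,\Gamma(\Theta_X,-)\bigr)$ is a well-defined nonzero element of $\F{Y_0}{\eta_0}^*\otimes\F{Y_1}{\eta_1}$, equivalently a linear isomorphism $\F{Y_0}{\eta_0}\to\F{Y_1}{\eta_1}$.

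For~(c), the Chern--Simons integrand $s^*\alpha(\Theta)$ is local, so for a glued bordism $X = X_1\cup_Y X_2$ additivity of integration yields $\Gamma(\Theta,s)=\Gamma(\Theta_1,s|_{X_1})+\Gamma(\Theta_2,s|_{X_2})$; exponentiating and contracting across $\F Y\eta$ recovers $\rF(X_2)\circ\rF(X_1)=\rF(X)$. Monoidality follows from the analogous additivity under disjoint union, together with $\F{Y\amalg Y'}{\eta\amalg \eta'}\cong \F Y\eta\otimes \F{Y'}{\eta'}$ read off from~\eqref{eq:9}. I expect the main obstacle to be the bookkeeping in~(c): one must verify that the natural evaluation pairing $\F{-Y}{\eta}\otimes\F Y\eta\to\CC$ is compatible with the gluing, which amounts to checking that a section of $\rho$ over the common boundary appears dually on the two sides with consistent signs from orientation reversal, so that the middle integral truly cancels when the two pieces are joined.
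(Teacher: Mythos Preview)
The paper does not give a proof of this theorem: the displayed equation uses the \texttt{qedequation} environment, which places the QED symbol at the end of the display, and the surrounding text simply says ``In other words, $\rF$~is an \emph{invertible field theory}\ldots'' with the reader referred to~\cite{F1,F2} for details and to Appendix~\ref{sec:6} for the general (nontrivializable) case. So your proposal is not being compared against an argument in the paper---you are supplying one where the paper deliberately omits it.

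Your sketch is sound for the scope of~\S\ref{sec:2}, namely trivializable $\Cx$-bundles. The cocycle computation in~(a) is correct: the discrepancy factor is $\exp\bigl(-\tfrac{\sqmo}{2\pi}\int_Y \tfrac{dh}{h}\wedge\tfrac{dh'}{h'}\bigr)$, and since each $\tfrac{1}{2\pi\sqmo}\tfrac{dh}{h}$ represents an integral class, the integral lies in $(2\pi\sqmo)^2\ZZ$ and the exponential is~$1$. Parts~(b) and~(c) correctly identify Stokes' theorem applied to~\eqref{eq:6} and locality of the integrand as the mechanisms behind the morphism assignment and the gluing law.

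One caveat worth flagging: the bordism category $\brdn$ as defined includes all $\Cx$-connections, not only those on trivializable bundles, whereas your argument (and the explicit formulas~\eqref{eq:5}--\eqref{eq:10}) presuppose the existence of global sections. The paper acknowledges this limitation at the start of~\S\ref{sec:2} and handles the general case in Appendix~\ref{sec:6} via the differential-cohomology pushforward~\eqref{eq:62}; your sketch, like the body of~\S\ref{sec:2}, does not cover that case.
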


\noindent
 In other words, $\rF$~is an \emph{invertible field theory}, often called
\emph{classical Chern-Simons theory}. 

As mentioned in footnote~\footref{foot:1}, one can extend~$\rF$ to a theory
defined on smooth families.  Thus if $\sY\to S$ is a fiber bundle with fibers
closed oriented 2-manifolds, and $\sQ\to Y$ is a principal $\Cx$-bundle with
connection~$\eta $, then the Chern-Simons theory produces $\F{\sY/S}{\eta
}\to S$, a complex line bundle with covariant derivative.  Its curvature is
  \begin{equation}\label{eq:11}
     \curv\left( \F{\sY/S}{\eta }\longrightarrow S \right) =
     \frac{\sqmo}{2\pi}\int_{\,\sY/S}\Omega (\eta )\wedge \Omega (\eta ),
  \end{equation}
where $\Omega (\eta )\in \OC2{\sY}$ is the curvature of the
$\Cx$-connection~$\eta $.  Parallel transport along a path $\gamma \:[0,1]\to
S$ is the value of~$\F{\gamma ^*\sY}{\gamma ^*\eta }$ on the pullback
connection~$\gamma ^*\eta $ on $\gamma ^*\sQ\to\gamma ^*\sY$.  In particular,
holonomies of $\F{\sY/S}{\eta }\to S$ are computed as values of~$\rF$ on
mapping tori.

  \begin{remark}[]\label{thm:3}
 Cheeger-Simons~\cite{ChS} introduce differential characters and write the
Chern-Simons invariant of a closed oriented 3-manifold in those terms.  The
entire theory~$\rF$ fits into the theory of differential
cohomology~\cite{HS,BNV}, beginning with a differential lift of ${c_1}^2\in
H^4(B\Cx;\ZZ)$.  We develop this idea in Appendix~\ref{sec:6}.
  \end{remark}

Our application of~$\rF$ in~\S\ref{sec:4} is to families of \emph{flat}
$\Cx$-connections.  In the remainder of this section we observe some
properties which reflect the \emph{topological} nature of~$\rF$ on flat
connections.  
 
First, a principal $\Cx$-bundle $P\to M$ over any smooth manifold~$M$ admits
a flat connection if and only if $c_1(P)\in H^2(M;\ZZ)$ has finite order.  In
particular, if $H_1(M)$~is torsionfree, then only trivializable principal
$\Cx$-bundles on~$M$ admit flat connections.  The equivalence class of a flat
connection on $P\to M$ is a lift $[\Theta ]\in H^1(M;\CZ)$ of $c_1(P)\in
H^2(M;\ZZ)$ in the long exact sequence of cohomology groups induced from the
short exact sequence $\ZZ\to\CC\to\CZ$ of coefficients.

\newpage
  \begin{theorem}[]\label{thm:4}
 \

 \begin{enumerate}[label=\textnormal{(\roman*)}]

 \item Let $\Theta $~be a flat connection on a principal $\Cx$-bundle $\pi
\:P\to X$ over a closed oriented 3-manifold.  Then its Chern-Simons invariant
is
  \begin{equation}\label{eq:12}
     \F X{\Theta } = \exp\Bigl( \tpi \left\langle \,[\Theta ]\smile
     c_1(P)\,,\,[X] 
     \,\right\rangle \Bigr),\qquad [\Theta ]\in H^1(M;\CZ). 
  \end{equation}

 \item Let $\eta $~be a flat connection on a principal $\Cx$-bundle $\rho
\:Q\to Y$ over a closed oriented 2-manifold.  Then the trivialization
$\tat\in \F Y{\eta }$ in Remark~\ref{thm:2} depends only on the homotopy
class of the section~$t$ of~$\rho $.

 \item Let $\Theta $~be a flat connection on a principal $\Cx$-bundle $\pi
\:P\to X$ over a compact oriented 3-manifold with boundary.  Suppose $s$~is a
section of~$\pi $.  Then $\F X{\Theta }=\tau \mstrut _{\partial s}$
in~$\F{\partial X}{\partial \Theta }$.  
 \end{enumerate}
  \end{theorem}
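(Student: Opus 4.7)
The plan is to dispatch the three parts in turn, using throughout that flatness forces $\Omega=0$ and hence $\alpha(\Theta)=-\tfrac{1}{4\pi^2}\Theta\wedge\Omega\equiv0$, and handling the general non-trivializable case in~(i) via the differential-cohomology packaging of~$\rF$ developed in Appendix~\ref{sec:6}.  Part~(iii) is essentially immediate: with $\alpha(\Theta)\equiv0$ we have $\Gamma(\Theta,s)=\int_X s^*\alpha(\Theta)=0$; the section~$s$ itself extends the boundary section~$\partial s$, so definition~\eqref{eq:10} gives $\F X\Theta(\partial s)=\exp(0)=1$, which by Remark~\ref{thm:2} is precisely the assertion $\F X\Theta=\tau_{\partial s}$.

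For part~(ii), two homotopic sections $t,t'$ of~$\rho$ differ by $t'=t\cdot h$ with $h\:Y\to\Cx$ in the identity component of $\Map(Y,\Cx)$; since $\exp\:\CC\to\Cx$ is a covering, such~$h$ lifts as $h=e^g$ for some smooth $g\:Y\to\CC$, and hence $dh/h=dg$.  Reading off the equivariance in~\eqref{eq:9} yields
\[
\tatp/\tat=\exp\!\Bigl(\tfrac{\sqmo}{2\pi}\int_Y t^*\eta\wedge dg\Bigr).
\]
Flatness of~$\eta$ gives $d(t^*\eta)=0$, hence $d(g\,t^*\eta)=dg\wedge t^*\eta=-t^*\eta\wedge dg$, and Stokes' theorem on the closed surface~$Y$ forces the integral to vanish; thus $\tatp=\tat$.

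Part~(i) is the least automatic and is where the main obstacle lies, since the formulas in~\S\ref{sec:2} only cover trivializable bundles.  The clean approach invokes the differential-cohomology framework of Appendix~\ref{sec:6}, writing $\F X\Theta=\exp\bigl(\tpi\int_X\cco(\Theta)^{2}\bigr)$ in terms of the Cheeger-Simons refinement $\cco(\Theta)\in\cH^2(X)$, with the integral understood as the differential-cohomology pairing $\cH^4(X)\otimes H_3(X)\to\CC/\ZZ$.  Flatness of~$\Theta$ makes $\cco(\Theta)$ a flat character---an element of $H^1(X;\CC/\ZZ)\hookrightarrow\cH^2(X)$ which by construction equals~$[\Theta]$.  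The key algebraic input is the standard identity in the ring~$\cH^{\ast}(X)$ that the product of a flat character with an arbitrary character is the cup of the flat class with the underlying integer class of the latter; applied here this gives $\cco(\Theta)^{2}=[\Theta]\smile c_1(P)\in H^3(X;\CC/\ZZ)$, and integrating against~$[X]$ and exponentiating yields the formula.  Without the appendix one would instead choose a bounding 4-manifold $(\tP,\tT)\to W$ and match $\int_W\omega(\tT)$ directly to the torsion pairing $H^1(X;\CC/\ZZ)\otimes H^2(X;\ZZ)\to\CC/\ZZ$, which is noticeably more painful.
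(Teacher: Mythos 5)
Your proposal is correct and follows essentially the same route as the paper: part (iii) from the vanishing of the Chern--Simons form $\alpha(\Theta)$ for flat $\Theta$, part (ii) from the closedness of $t^*\eta$ (your explicit Stokes computation with $dh/h=dg$ is just an unwinding of the paper's remark that the integrand is a product of closed $1$-forms), and part (i) via the differential-cohomology identity that the square of the flat character $\cco(\Theta)=[\Theta]$ is the cup product $[\Theta]\smile c_1(P)$, which is exactly the argument the paper defers to the end of Appendix~A.
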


  \begin{proof}
 Part~(i) is easy unless $\pi $~does not admit a section, in which case the
techniques here do not apply; we supply a proof at the end of
Appendix~\ref{sec:6}.  For~(ii) observe that the integrand in~\eqref{eq:9} is
the product of two closed 1-forms, so only depends on their de Rham
cohomology classes.  The generalization of~(iii) to arbitrary
connections~$\Theta $ is
  \begin{equation}\label{eq:13}
     \F X{\Theta } = \exp\left( \tpi\int_{X}s^*\alpha (\Theta ) \right) \tau
     \mstrut _{\partial s}, 
  \end{equation}
which is essentially the construction of~\eqref{eq:10}.  For flat~$\Theta $
the Chern-Simons form~$\alpha (\Theta )$ vanishes.
  \end{proof}

   \section{The spin refinement}\label{sec:3}

On spin manifolds Chern-Simons theory refines to a theory~$\rS$ with
$\rS^{\otimes 2}\cong \rF$.  The extra factor of~2 flows from that in the
primary invariant~\eqref{eq:1} on spin manifolds: if $P\to W$ is a principal
$\Cx$-bundle over a closed spin 4-manifold~$W$, then 
  \begin{equation}\label{eq:14}
     \frac 12 \left\langle {c_1}(P)^2,[W] \right\rangle\; \in \ZZ . 
  \end{equation}
In this section we state the properties of~$\rS$ we need; proofs are deferred
to Appendix~\ref{sec:6}. 

  \begin{remark}[]\label{thm:5}
 The function 
  \begin{equation}\label{eq:15}
     \begin{aligned} H^2(W;\ZZ)&\longrightarrow \qquad \ZZ \\ x&\longmapsto \frac
      12\left\langle x\smile x,[W] \right\rangle \end{aligned} 
  \end{equation}
is a quadratic refinement of the bihomomorphism 
  \begin{equation}\label{eq:16}
     \begin{aligned} H^2(W;\ZZ)\times H^2(W;\ZZ)&\longrightarrow\,\quad  \ZZ \\
      x\qquad ,\qquad y\qquad &\longmapsto \left\langle x\smile y,[W] \right\rangle
      \end{aligned} 
  \end{equation}
There is a compatible quadratic form on $H^1(X;\CZ)$ for $X$~a closed spin
3-manifold~\cite{MS,LL}; it computes the value of~$\thS X\Theta $ for flat
connections~$\Theta $.
  \end{remark}

As in Theorem~\ref{thm:s3} the formal properties are summarized in the
statement that $\rS$~is an invertible field theory (which can be evaluated on
fiber bundles).  Manifolds in the domain bordism category of~$\rS$ carry a
spin structure.  The codomain of~$\rS$ is the Picard groupoid~$\sLC$ whose
objects are $\zt$-graded (super) lines and whose morphisms are even
isomorphisms of super lines.  The monoidal structure is tensor product, and
the symmetry implements the Koszul sign rule.

  \begin{theorem}[]\label{thm:6}
 Spin Chern-Simons theory is a symmetric monoidal functor  
  \begin{equation}\label{eq:17}
     \rS \:\brds\longrightarrow \sLC. 
  \end{equation}
There is an isomorphism $\rS^{\otimes 2}\cong \rF$. 
  \end{theorem}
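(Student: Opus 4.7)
Plan: I would construct $\rS$ via differential cohomology, following the strategy outlined in Remark~\ref{thm:3} for the non-spin theory~$\rF$. The key input is a universal differential cohomology class $\cla$ of degree~$4$ (an element of $\cH^4$ of the relevant classifying stack) for spin manifolds equipped with a principal $\Cx$-bundle with connection, satisfying the relation $2\cla=\cco^2$, where $\cco$ is the Cheeger--Simons differential refinement of~$c_1$. Its primary topological content is the quadratic form $x\mapsto \tfrac12\langle x\smile x,[W]\rangle$ on $H^2(W;\ZZ)$, which is integer-valued on closed spin 4-manifolds by~\eqref{eq:14}. Producing $\cla$ functorially amounts to a spin-sensitive refinement of the Pontryagin square in differential cohomology.

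Given $\cla$, I would apply the general machinery that assigns an invertible field theory to such a differential class, extended to the family/sheafified setting indicated in footnote~\footref{foot:1}. Fiber integration of $\cla$ over the universal family produces all values of~$\rS$: on a closed spin 3-manifold an element $\thS{X}{\Theta}\in\Cx$; on a closed spin 2-manifold a super line, whose $\zt$-grading reflects the dependence on the spin structure; on a compact spin 3-manifold with boundary a nonzero element of the boundary super line; and on a smooth family the corresponding line bundle with covariant derivative. Symmetric monoidality and bordism functoriality follow formally from the categorical properties of fiber integration. The isomorphism $\rS^{\otimes 2}\cong \rF$ is then immediate from $2\cla=\cco^2$, since fiber integration is additive: the theory associated to $2\cla$ is the tensor square of the theory associated to $\cla$, and it coincides with the theory associated to $\cco^2$, namely $\rF$ of Theorem~\ref{thm:s3}.

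The main obstacle is the construction of $\cla$ itself: producing it universally, with the relation $2\cla=\cco^2$ holding at the level of differential cohomology classes rather than only at the level of curvature forms and underlying integer cohomology classes on closed 4-manifolds. Once $\cla$ is in hand, the rest is a formal consequence of the differential-cohomology-based invertible field theory formalism, together with the family refinement needed to evaluate $\rS$ on fiber bundles.
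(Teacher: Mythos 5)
Your overall architecture (universal degree--4 differential class, fiber integration, additivity giving $\rS^{\otimes2}\cong\rF$) matches the paper's, but the key input as you state it does not exist: there is no class $\cla$ in \emph{ordinary} differential cohomology $\cH^4$ of the relevant classifying object with $2\cla=\cco^2$. The group $H^4(B\Cx;\ZZ)\cong\ZZ$ is generated by ${c_1}^2$, which is not divisible by~$2$, and incorporating the spin structure into the classifying stack does not help: the cross terms with $H^*(B\Spin_n;\ZZ)$ vanish in the relevant degrees, so $H^4$ of the combined object still contains no half of ${c_1}^2$. The evenness~\eqref{eq:14} is a consequence of the Wu formula ${c_1}^2\equiv c_1\smile w_2(W)\pmod 2$ on a \emph{closed spin 4-manifold}, i.e., it is a property of the pushforward to a point, not a divisibility of the universal class; so no ``spin-sensitive Pontryagin square'' living in $\cH^4$ of a classifying stack can deliver what you want.

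The paper's resolution is to change the coefficient spectrum: $E$ is defined by the nonsplit extension $H\ZZ\to E\to\Sigma^{-2}H\zt$ (a truncation of connective $ko$), so that $E^4(B\Cx)$ sits in the nonsplit sequence~\eqref{eq:55} and contains a unique $\lambda$ with $2\lambda=i({c_1}^2)$ and $j(\lambda)=\overline{c_1}$; this is refined to $\cla\in\dE^4(\BNG)$ by prescribing the curvature $-z_1z_2/8\pi^2$. The spin structure then enters not through the class but through the $E$-orientation of spin manifolds, which is what makes the pushforward~\eqref{eq:62} defined. This device also produces structure your version would miss: the $\Sigma^{-2}H\zt$ layer is exactly what makes $\dE^2(S)$ classify \emph{super} lines with connection (so the codomain is $\sLC$, not $\LC$), gives the grading $p_*[\overline{c_1}(P)]$ in Theorem~\ref{thm:s20}(v), and produces the sign $\epsilon\mstrut_{t,h}$ in Theorem~\ref{thm:s20}(vii), all of which are essential downstream (e.g., for Theorem~\ref{thm:11}). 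A pushforward of a class in ordinary $\cH^4$ over a surface lands in $\cH^2(S)$, which sees none of this. Once the class is placed in $\dE^4(\BNG)$, the remainder of your argument (functoriality and monoidality from integration, and $\rS^{\otimes2}\cong\rF$ from $2\lambda=i({c_1}^2)$ together with compatibility of $i$ with integration) is correct and is essentially what the paper does.
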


  \begin{remark}[]\label{thm:10}
 The invariant of a $\Cx$-connection over a closed spin 3-manifold has a
description analogous to that in Remark~\ref{thm:1}.  In that case we must
bound~$X$ by a compact \emph{spin} 4-manifold and put the factor~$1/2$ in the
integral~\eqref{eq:7}.
  \end{remark}

For convenience we state further properties of~$\rS$ simultaneously for
families of 2-~and 3-manifolds.  Consider the sequence
  \begin{equation}\label{eq:s59}
     P\xrightarrow{\;\;\pi \;\;} M_\sigma \xrightarrow{\;\;p\;\;}
     S 
  \end{equation}
of smooth maps in which $p$~is a fiber bundle of smooth manifolds, or of
bordisms; $\pi $~is a principal $\Cx$-bundle with connection $\Theta \in
\OC1P$; and $\sigma $~is a spin structure on the relative tangent bundle
$T(M/S)\to S$.  Let $\dim(p)\:M\to \ZZ^{\ge0}$ be the locally constant
function whose value at~$m\in M$ is the dimension of the relative tangent
space~$T_m(M/S)$.  Let $\Omega(\Theta ) \in \Omega ^2_M(\CC)$ be the
curvature of~$\Theta $.  The following is proved in Appendix~\ref{sec:6}.

  \begin{theorem}[]\label{thm:s20}
 \

 \begin{enumerate}[label=\textnormal{(\roman*)}]

 \item If $\dim(p)=3$ and the fibers of~$p$ are closed, then $\thS{M_\sigma/S
}{\Theta}\:S\to\Cx$  satisfies
  \begin{equation}\label{eq:s60}
     \frac{d\thS{M_\sigma/S}{\Theta}}{\thS{M_\sigma/S }{\Theta}} =
     -\frac{\sqmo}{4\pi 
     }\int_{M/S} \Omega(\Theta ) \wedge \Omega(\Theta ) .
  \end{equation}

 \item Continuing, if $\sigma '$~is a spin structure whose difference
with~$\sigma $ represents a class $\delta \in H^1(M;\zt)$, then the ratio of
spin Chern-Simons invariants is 
  \begin{equation}\label{eq:s61}
     \frac{\thS{M_{\sigma '}/S}{\Theta}}{\thS{M_{\sigma }/S}{\Theta}} =
     (-1)^{p_*(\delta \smile \overline{c_1}(P) )}, 
  \end{equation}
where $\overline {c_1}$~is the mod~2 reduction of the first Chern class.

 \item If $\dim(p)=3$ and the fibers of~$p$ are compact with boundary, then
$\thS{M_{\sigma }/S}{\Theta}$ is a section of the even complex line bundle
$\thS{\partial M_{\sigma }/S}{\Theta}\to S$; its covariant derivative is
  \begin{equation}\label{eq:40}
     \nabla \thS{M_{\sigma}/S}{\Theta} = \left[ -\frac{\sqmo}{4\pi 
     }\int_{M/S} \Omega(\Theta ) \wedge \Omega(\Theta ) \right]
     \thS{M_{\sigma }/S}{\Theta}. 
  \end{equation}

 \item Continuing, suppose $s\:M\to P$ is a section of~$\pi $.  Its
restriction~$\partial s$ to~$\partial M$ induces a trivialization $\tau
\mstrut _{\partial s}$ of $\thS{\partial M_{\sigma}/S}{\Theta}\to S$.  Then 
  \begin{equation}\label{eq:41}
     \thS{M_{\sigma}/S}{\Theta} = \exp\left( -\frac{\sqmo}{4\pi 
     }\int_{M/S}s^*\Theta\wedge \Omega (\Theta )
      \right) \tau \mstrut _{\partial s} . 
  \end{equation}

 \item If $\dim(p)=2$ and the fibers of~$p$ are closed, then $\thS{M_{\sigma
}/S}{\Theta}\to S$ is a complex super line bundle with covariant
derivative; its curvature is
  \begin{equation}\label{eq:s62}
     \curv\left(\thS{M_{\sigma}/S}{\Theta}\longrightarrow  S\right)=
     \frac{\sqmo}{4\pi 
     }\int_{M/S} \Omega(\Theta ) \wedge \Omega(\Theta ) .  
  \end{equation}
Its $\zt$-grading is $p_*\bigl[\overline{c_1}(P) \bigr]\:S\to\zt$, where
$\overline{c_1}$~is the mod~2 reduced first Chern class.

 \item Continuing, a section $t\:M\to P$ of~$\pi $ induces a trivialization
$\tat\:S\to L$ of the Chern-Simons line bundle, relative to which the
connection form is
  \begin{equation}\label{eq:s63}
     \frac{\nabla \tat}{\tat} = \frac{\sqmo}{4\pi }\int_{M/S} t^*\Theta \wedge
     \Omega (\Theta ). 
  \end{equation}

 \item Continuing, given $h\:M\to\Cx$ set $t'=t\cdot h\:M\to P$.  Then
  \begin{equation}\label{eq:s64}
     {\tatp} = \epsilon \mstrut _{t,h}\exp\left( -\frac{\sqmo}{4\pi }
     \int_{M/S} t^*\Theta\wedge \frac{dh}{h} \right) {\tat} , 
  \end{equation}
where 
  \begin{equation}\label{eq:s65}
     \epsilon \mstrut _{t,h}(s)=(-1)^{\sigma _s([h_s])},\qquad s\in S.
  \end{equation}
Here $\sigma _s\:H^1\bigl(p\inv (s);\zt\bigr)\to\zt$ is the quadratic
refinement of the intersection pairing given by the spin structure on the
fiber~$p\inv (s)$ , and $[h_s]\in H^1\bigl(p\inv (s);\zt\bigr)$ is the
reduction modulo two of the homotopy class of~$h\res{p\inv (s)}$.

 \end{enumerate} 
  \end{theorem}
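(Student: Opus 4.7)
The plan is to derive all seven parts from the differential-cohomological construction of~$\rS$ promised in Appendix~\ref{sec:6}: $\rS$ arises from a spin-refined differential lift of the universal class $\frac{1}{2}c_1^2\in H^4(B^{\textrm{spin}}\Cx;\ZZ)$, whose Chern-Weil curvature is $-\frac{1}{8\pi^2}\Omega(\Theta)\wedge\Omega(\Theta)$, exactly half of the form~\eqref{eq:2} controlling~$\rF$. Granted this framework, parts~(i), (iii), and~(v) become instances of the general transgression formulas for differential characters over a fiber bundle. Integrating $-\frac{1}{8\pi^2}\Omega\wedge\Omega$ along 3-dimensional fibers, closed or with boundary, and multiplying by~$\tpi$ to convert differential characters into $\Cx$-valued exponentials, produces the prescribed $-\frac{\sqmo}{4\pi}$ prefactor in~\eqref{eq:s60} and~\eqref{eq:40}; integrating along 2-dimensional closed fibers produces the curvature formula~\eqref{eq:s62} with the opposite orientation sign. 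That $\thS{M_\sigma/S}{\Theta}$ is really a section (resp.\ a line bundle with connection) in~(iii) and~(v) is then automatic from the field-theoretic output of~$\rS$.

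The $\zt$-grading in~(v) is read off the spin differential lift of $\frac12 c_1^2$: its topological content over a closed spin 2-manifold~$Y$ sends a $\Cx$-bundle $Q\to Y$ to $\langle \overline{c_1}(Q),[Y]\rangle\in \zt$, because the Wu formula identifies $\frac12\langle c_1^2,[Y]\rangle \bmod 2$ with the mod~2 degree of~$Q$; applied fiberwise this yields $p_*[\overline{c_1}(P)]$. Parts~(iv) and~(vi) now follow exactly as in the classical derivation of \eqref{eq:5}--\eqref{eq:10}: a section~$s$ provides the explicit primitive $-\frac{1}{8\pi^2}s^*(\Theta\wedge\Omega)$ for the curvature form, whose comparison with the canonical trivialization of the transgressed bundle gives \eqref{eq:41} and~\eqref{eq:s63}.

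Part~(ii) is comparatively soft: by Theorem~\ref{thm:6} the ratio $\thS{M_{\sigma'}/S}{\Theta}/\thS{M_\sigma/S}{\Theta}$ squares to~$1$, so it is a sign depending only on $(\delta,c_1(P))$ and bilinear in them; evaluating on $\Sigma\times \cir$-type 3-manifolds with constant connection and invoking the quadratic refinement of Remark~\ref{thm:5} pins down the formula $(-1)^{p_*(\delta\smile\overline{c_1}(P))}$. The main obstacle is part~(vii). The exponential factor in~\eqref{eq:s64} is forced by the same calculation as~\eqref{eq:6}, applied to the square-root theory, but the sign $\epsilon_{t,h}$ is a genuine spin anomaly. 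My plan is to reduce it to a closed computation: form the mapping torus $M\times_h\cir\to S\times\cir$ obtained by gluing along the gauge transformation~$h$, and evaluate $\rS$ on the resulting closed spin 3-manifold family. By the holonomy interpretation of parallel transport noted after~\eqref{eq:11}, now in its spin version, this evaluation equals $\tatp/\tat$ up to the explicit exponential; the same invariant is also computed topologically from~(ii) and Theorem~\ref{thm:4}(i) applied to the now-flat connection on the mapping torus, and matching the two yields the quadratic refinement~$\sigma_s$ appearing in~\eqref{eq:s65}. The delicate point is the bookkeeping of spin structures across the gluing, and this is where I expect the bulk of the effort to go.
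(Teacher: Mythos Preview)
Your overall strategy---derive everything from the differential $E$-cohomology pushforward of the universal class~$\cla$---matches the paper, and your treatment of parts~(i), (iii), (iv), (v) (curvature), and~(vi) is essentially the paper's argument. Two points deserve comment.

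\textbf{The $\zt$-grading in~(v).} Your justification is wrong: on a closed 2-manifold~$Y$ the class $c_1^2$ lives in $H^4(Y)=0$, so ``$\tfrac12\langle c_1^2,[Y]\rangle\bmod 2$'' is vacuous and no Wu-type identity extracts~$\langle\overline{c_1},[Y]\rangle$ from it. The grading is not a shadow of the Chern--Weil form at all; it comes from the extension~\eqref{eq:54}. The paper's argument is that $j(\lambda)=\overline{c_1}$ in~\eqref{eq:55}, and pushforward in $E$-theory over a spin 2-manifold is compatible with~$j$ via the square~\eqref{eq:81}, landing in $E^2(\pt)\cong H^0(\pt;\zt)$. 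You should replace the Wu reasoning with this.

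\textbf{Part~(vii).} Your mapping-torus plan is a genuinely different route from the paper's. The paper works universally: it proves Proposition~\ref{thm:19}, which computes the difference of trivializations on the classifying object~$\sF\cong\ENG\times\Cx$ as $\ce=\co+\langle\Tu\wedge\theta\rangle$; the second summand gives the exponential in~\eqref{eq:s64}, and the first, $\co\in\dE^3(\Cx)\cong\zt$, is the universal sign. The identification of $p_*h^*\co$ with the spin quadratic form~$\sigma_s$ is then carried out by a $KO$-theoretic transversality argument (deform $h$ into~$\TT$, pull back to the preimage of a point, and use the $KO^{-1}(\pt)$-valued pushforward of spin circles). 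Your approach instead closes up to a mapping torus and appeals to the quadratic refinement on $H^1$ of a closed spin 3-manifold alluded to in Remark~\ref{thm:5}. This can be made to work, but note two gaps: the connection on the mapping torus is not ``now-flat'' unless you first deform $\Theta$ (legitimate, since $\epsilon_{t,h}$ is discrete), and the 3-manifold quadratic form you invoke is only \emph{mentioned} in Remark~\ref{thm:5}, not established, so you are trading one unproved ingredient for another. The paper's universal computation avoids this by staying inside the $E$/$ko$ framework already set up; your route would be more self-contained if you supplied the 3-manifold quadratic form directly, but as written it is circular.

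For part~(ii), your ``squares to~1, bilinear, check on generators'' sketch is more elementary than the paper's direct citation of~\cite[Proposition~4.4]{F3}, and would be fine once you actually verify bilinearity in~$\delta$ (this is not automatic from $\rS^{\otimes2}\cong\rF$ alone).
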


  \begin{corollary}[]\label{thm:s27}
 In the situation of Theorem~\ref{thm:s20}\,\textnormal{(vii)}, if $\Theta $~is
flat then \eqref{eq:s64}~only depends on the homotopy class of~$h$.
  \end{corollary}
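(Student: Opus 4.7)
The plan is to dispatch the two factors on the right-hand side of \eqref{eq:s64} one at a time. The sign $\epsilon\mstrut _{t,h}$ is invariant by inspection: its definition \eqref{eq:s65} uses $h$ only through the mod-2 reduction $[h_s]\in H^1\bigl(p\inv(s);\zt\bigr)$ of the homotopy class of $h\res{p\inv(s)}$, so it depends only on the homotopy class of $h$. The real content of the corollary is therefore that, under flatness, the exponent
\begin{equation*}
   I(h)\;=\;\int_{M/S} t^*\Theta \wedge \frac{dh}{h}
\end{equation*}
depends only on the homotopy class of $h$.

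Given homotopic $h_0,h_1\:M\to\Cx$, I would compare $I(h_0)$ and $I(h_1)$ by lifting. Since $h_1/h_0\:M\to\Cx$ is null-homotopic and $\exp\:\CC\to\Cx$ is a covering, there is a smooth $f\:M\to\CC$ with $h_1/h_0=\exp(f)$; a direct calculation then gives $\frac{dh_1}{h_1}-\frac{dh_0}{h_0}=df$. Flatness enters here: $\Omega(\Theta)=0$ forces $d\Theta=0$, so $t^*\Theta\in \Omega ^1_M(\CC)$ is closed, and hence
\begin{equation*}
   t^*\Theta\wedge df \;=\; -d\bigl(f\cdot t^*\Theta\bigr).
\end{equation*}
Fiberwise Stokes' theorem, valid because the fibers of~$p$ are closed 2-manifolds in the setting of Theorem~\ref{thm:s20}(vii), then yields $I(h_1)-I(h_0)=0$.

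I expect the single mild obstacle to be producing the smooth lift $f$ across the entire family: given a homotopy $h_0\sim h_1$ as maps $M\to\Cx$, one lifts the homotopy through $\exp$ and reads off $f$ at the endpoint, which is standard covering-space theory. With that in hand the proof consists of nothing more than the closedness of $t^*\Theta$ (from flatness) plus Stokes on closed fibers.
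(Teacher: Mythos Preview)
Your proof is correct and follows essentially the same approach as the paper: the paper observes that $\epsilon_{t,h}$ depends only on the homotopy class modulo two, and that flatness makes $t^*\Theta$ closed so that the integral in~\eqref{eq:s64} is the cohomology pushforward $p_*\bigl([t^*\Theta]\smile[h]\bigr)$. Your explicit Stokes computation is exactly the de Rham-level verification of that statement, so the two arguments coincide.
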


  \begin{proof}
  The factor~$\epsilon\mstrut _{t,h}$ in~\eqref{eq:s65} depends only on the
homotopy class of~$h$ modulo two.  If $\Theta $~is flat, then $t^*\Theta $~is
a closed 1-form, and the integral in~\eqref{eq:s64} reduces to the cohomology
pushforward $p_*\:H^2(M;\CC)\to H^0(S;\CC)$ applied to $ [t^*\Theta
]\smile[h]$, as in Theorem~\ref{thm:4}(ii).
  \end{proof}

In the setup of Theorem~\ref{thm:s20}, suppose given two families over the
same base~$S$ and maps 
  \begin{equation}\label{eq:52}
     \begin{gathered} \xymatrix@C-3ex@R+1ex{P\ar[rr]^{\tps} \ar[d]_{\pi } &&
     P'\ar[d]^{\pi '} \\ M_\sigma \ar[rr]^{\psi }\ar[dr] && M'_{\sigma
     '}\ar[dl]\\ &S} 
     \end{gathered} 
  \end{equation}
such that $\psi $~is a diffeomorphism of spin manifolds, $\tps$~is an
isomorphism of principal $\Cx$-bundles, and $\tps$~preserves the connections: 
$\tps^*\Theta '=\Theta $.

  \begin{theorem}[]\label{thm:17}
 \

 \begin{enumerate}[label=\textnormal{(\roman*)}]

 \item If $\dim(p)=3$ and the fibers of~$p$ are closed, then $\thS{M'_{\sigma
'}/S}{\Theta'} = \thS{M_\sigma /S}{\Theta}$.

 \item If $\dim(p)=2$ and the fibers of~$p$ are closed, then there is a flat
isomorphism of spin Chern-Simons line bundles 
  \begin{equation}\label{eq:53}
     \begin{gathered} \xymatrix@C-3ex@R+1ex{\thS{M_\sigma
     /S}{\Theta}\ar[dr]\ar[rr]^\Psi 
     &&\thS{M'_{\sigma '}/S}{\Theta '}\ar[dl] \\&S} \end{gathered} 
  \end{equation}

 \item Continuing, if $t\:M\to P$ and $t'\:M'\to P'$ are sections of~$\pi
,\pi '$ such that $\tps\circ t=t'\circ \psi $, then the induced
trivializations~$\tat,\tatp$ of the line bundles in~\eqref{eq:53}
satisfy $\tatp=\Psi \circ \tat$.

 \end{enumerate} 
  \end{theorem}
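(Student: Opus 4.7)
The plan is to derive all three statements from the functoriality of the spin Chern-Simons field theory $\rS$ (Theorem~\ref{thm:6}), sheafified to smooth families as indicated in footnote~\footref{foot:1}. The data $(\psi,\tps)$ together with the hypotheses that $\psi$ preserves spin structures and $\tps^*\Theta'=\Theta$ is precisely an isomorphism in the extended bordism category $\brds$ between the two families over~$S$ appearing in~\eqref{eq:52}. Applying the functor $\rS$ then produces the asserted equalities and isomorphisms automatically; the only work is to unpack what functoriality says in each case.

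For part~(i), the closed 3-manifold families are endomorphisms of the empty 2-manifold in the family-theoretic bordism category, so $\rS$ assigns to each a function $S\to\Cx$. Any functor identifies isomorphic morphisms, which gives $\thS{M'_{\sigma'}/S}{\Theta'}=\thS{M_\sigma/S}{\Theta}$. For part~(ii), closed 2-manifold families are objects, and $\rS$ sends the isomorphism $(\psi,\tps)$ to an isomorphism $\Psi$ of complex super line bundles over~$S$. Flatness of~$\Psi$ is built into the fact that the target of the sheafified~$\rS$ consists of line bundles with connection and connection-preserving morphisms, and the $\zt$-grading matches because $\tps^*c_1(P')=c_1(P)$.

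As a sanity check (and as a direct derivation for readers who prefer not to invoke the sheafified target category), flatness in~(ii) follows from Theorem~\ref{thm:s20}\,(vi): pick local sections $t,t'$ compatible in the sense $\tps\circ t=t'\circ\psi$. The relation $\tps^*\Theta'=\Theta$ combined with the commutativity of~\eqref{eq:52} gives $t^*\Theta=\psi^*\bigl((t')^*\Theta'\bigr)$ and $\Omega(\Theta)=\psi^*\Omega(\Theta')$. Since $\psi$ is a fiberwise orientation-preserving diffeomorphism, the fiber integrals in~\eqref{eq:s63} agree, so the connection forms of the two section-induced trivializations coincide under the assignment $\tat\mapsto\Psi\inv(\tatp)$; this is exactly flatness of~$\Psi$.

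Part~(iii) is essentially the content of the preceding paragraph, packaged categorically: the section-induced trivialization is a natural construction of the field theory applied to bordisms with section, so the compatibility $\tps\circ t=t'\circ\psi$ forces $\Psi\circ\tat=\tatp$. Alternatively, both sides satisfy the same first-order ODE on~$S$ by the connection-form identity just verified, and at any basepoint $s\in S$ the pointwise functoriality of~$\rS$ on the closed 2-manifold fiber $(M_\sigma)_s$ gives equality of initial values. The main obstacle is thus conceptual rather than technical: one must accept that $\rS$ is a functor on the sheafified bordism category so that naturality under $(\psi,\tps)$ is automatic. The only nontrivial computations are the pullback identities for $t^*\Theta$ and $\Omega(\Theta)$, both immediate from the hypotheses.
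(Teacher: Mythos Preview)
Your proposal is correct and aligns with the paper's own argument, which is the single sentence in Appendix~\ref{sec:6}: ``The functoriality properties in Theorem~\ref{thm:17} follow from~\eqref{eq:62} and the functoriality of the differential characteristic class $\cla\in \dE^4(\BNG)$.'' You phrase this via the field-theory functor~$\rS$ of Theorem~\ref{thm:6} (plus its sheafified extension to families) rather than via the explicit differential-cohomology pushforward~\eqref{eq:62}, but since~$\rS$ is \emph{defined} by~\eqref{eq:62} these are the same argument at two levels of abstraction; your added sanity check using Theorem~\ref{thm:s20}\,(vi) is a pleasant concrete verification that the paper omits.
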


\noindent
 We leave the reader to formulate and prove (1)~functoriality for the case
$\dim(p)=3$ and the fibers of~$p$ are compact manifolds with boundary, and
(2)~the behavior of~$\rS$ under reversal of orientation.  Theorem~\ref{thm:17}
follows from the constructions in Appendix~\ref{sec:6}.

   \section{The dilogarithm from abelian Chern-Simons}\label{sec:4}

In~\S\ref{subsec:4.1} we use the spin Chern-Simons theory of~\S\ref{sec:3} to
construct a holomorphic function~$L$ on an abelian cover of the thrice
punctured complex projective line.  We identify it with the enhanced Rogers
dilogarithm in~\S\ref{subsec:4.2}.

  \subsection{Construction of~$L$}\label{subsec:4.1}

Fix the standard torus 
  \begin{equation}\label{eq:n14}
     T = \RR^2\!\!\bigm/\!\ZZ^2 
  \end{equation}
and standard coordinates~$\theta ^1,\theta ^2$ on~$\RR^2$.  The first
homology group~$H_1(T)$ has generators the coordinate loops $t\mapsto(t,0)$ and
$t\mapsto(0,t)$, $0\le t\le 1$, in $(\theta ^1,\theta ^2)$ coordinates.  Let
$\sigma $~be the spin structure on~$T$ characterized by the property that the
coordinate loops inherit the bounding spin structure.  Introduce
  \begin{equation}\label{eq:21}
     \begin{aligned} \hMT&=\CC^2 \\ \MT&=(\Cx)^2\end{aligned} 
  \end{equation}
with standard coordinates $u_1,u_2$ and $\mu _1,\mu _2$, respectively.
Define the principal $\ZZ^2$-bundle
  \begin{equation}\label{eq:19}
  \begin{aligned}
     e\:\;\hMT\quad &\longrightarrow \quad\MT \\ 
     (u_1,u_2)&\longmapsto (e^{u_1},e^{u_2})
  \end{aligned}
  \end{equation}
The notation~\eqref{eq:21} is deliberately evocative of moduli spaces; see
Remark~\ref{thm:7} below.

The trivial $\Cx$-bundle 
  \begin{equation}\label{eq:20}
     \hr \:\hMT\times T\times \Cx\longrightarrow \hMT\times T 
  \end{equation}
with section~$\hto$ carries a complex connection form~$\he $ characterized
by
  \begin{equation}\label{eq:n17}
     \hto^*\he  = -u_1\,d\theta ^1 - u_2\,d\theta ^2\;\in \Omega
     ^1_{\hMT\times T}(\CC) .
  \end{equation}
The action of~$\ZZ^2$ on the base of~\eqref{eq:20} lifts to the total
space:
  \begin{multline}\label{eq:n20}
     (n_1,n_2)\cdot (u_1,u_2,\theta ^1,\theta ^2,\lambda )\\=
     \Bigl(u_1 + \tpi n_1,u_2+\tpi n_2,\theta ^1,\theta ^2,\exp
     \bigl[\tpi(n_1\theta ^1+n_2\theta ^2)\bigr]\lambda \Bigr), 
  \end{multline}
where $(n_1,n_2)\in \ZZ^2$ and $\lambda \in \Cx$; the connection form~$\he $
is preserved.  Hence the $\Cx$-bundle~\eqref{eq:20} with connection descends
to a $\Cx$-bundle
  \begin{equation}\label{eq:n21}
     \rho \:\sQ\longrightarrow \MT\times T
  \end{equation}
with connection~$\eta $.   Its curvature is the differential
of~\eqref{eq:n17}: 
  \begin{equation}\label{eq:22}
     \Omega (\eta )=-\frac{d\mu _1}{\mu _1}\wedge d\theta ^1 - \frac{d\mu
     _2}{\mu _2}\wedge d\theta ^2. 
  \end{equation}
This formula shows that the bundle~$\rho $ in~\eqref{eq:n21} is topologically
nontrivial.  (The base $\MT\times T$ deformation retracts to a 4-torus; the
restrictions to two sub 2-tori have nonzero first Chern class.)

  \begin{remark}[]\label{thm:7}
 The connection~ $\eta $ on~$\rho $ in~\eqref{eq:n21} defines a
\emph{universal} family of \emph{flat} $\Cx$-connections over~$T$.  Namely,
$\eta $~is flat on the fibers of the projection to~$\MT$, and its holonomies
at~$(\mu _1,\mu _2)\in \MT$ about the standard cycles on~$T$ are~$\mu _1 ,\mu
_2 $, as follows from~\eqref{eq:n17}.\footnote{The holonomy is the
exponential of \emph{minus} the integral of the connection form.  The signs
are chosen so that \eqref{eq:27}~ matches the differential of the
dilogarithm.}  Furthermore, the pullback under~\eqref{eq:19} is a universal
family of flat $\Cx$-connections over~$T$ equipped with a homotopy class of
trivializations of the underlying $\Cx$-bundle.  This explains the moduli
space notation~\eqref{eq:21}.  Also, $\MT$~is a holomorphic symplectic
manifold; see~\eqref{eq:sd24} below.
  \end{remark}

Define the submanifolds
  \begin{equation}\label{eq:n15}
  \begin{aligned} 
     \hMTp &= \bigl\{(u_1,u_2)\in \hMT : e^{u_1} + e^{u_2}=1\bigr\}\\
     \MTp &= \bigl\{(\mu _1,\mu _2)\in \MT: \mu _1+\mu _2=1\bigr\} .
  \end{aligned}
  \end{equation}
Then  
  \begin{equation}\label{eq:28}
     \begin{aligned} \MTp\;\;&\longrightarrow \CP^1\setminus \{0,1,\infty \} \\
      (\mu _1,\mu _2)&\longmapsto \mu _1\end{aligned} 
  \end{equation}
is a diffeomorphism.  Also, \eqref{eq:19}~restricts to a principal
$\ZZ^2$-bundle $e'\:\hMTp\to\MTp$.  Then~$\rho $ in~\eqref{eq:n21} and its 
pullbacks and restrictions fit into a diagram of principal $\Cx$-bundles with
connection: 
  \begin{equation}\label{eq:23}
     \begin{gathered} \xymatrix@C-2ex{&\;(\epsilon')
     ^*\sQ'\ar@<-.5ex>[dl]_<<<<<<<{\hr'} && \;\epsilon
     ^*\sQ\ar@<-.5ex>[dl]_<<<<<<<{\hr}\\ \hMTp\times
     T{\ar@[red]@<-.5ex>[ur]_<<<<<<<<<{\red\hto'}}\ar[dd]^{\epsilon'}
     \;\ar@{^{(}->}[rr]  
     && \hMT\times T\ar@[red]@<-.5ex>[ur]_<<<<<<<<<{\red\hto}\ar[dd]^\epsilon \\
     &\sQ'\ar[dl]_<<<<<<<{\rho'} && 
     \sQ\ar[dl]_<<<<<<<{\rho}\\ \MTp\times T\;\ar@{^{(}->}[rr] && \MT\times
     T} \end{gathered}  
  \end{equation}
Here $\epsilon =e\times \id\mstrut _T$ and $\epsilon '=e'\times \id\mstrut
_T$\,. 

Apply the spin Chern-Simons theory~$\rS$ of~\S\ref{sec:3} to the four
$\Cx$-bundles with connection (two of them with trivialization)
in~\eqref{eq:23}.  Let
  \begin{equation}\label{eq:sd25}
     \sL\longrightarrow \MT
  \end{equation}
be the spin Chern-Simons line bundle $\thS{(\MT\times T)/\MT\,}{\,\eta
}\to\MT$ with its covariant derivative.  Observe that the $\Cx$-bundle~$\rho
$ restricted to $\mu _1\times \mu _2\times T$ is topologically trivial for
all $(\mu _1,\mu _2)\in \MT$, since the restriction carries a flat
connection, from which it follows that \eqref{eq:sd25}~is an \emph{even} line
bundle (see Theorem~\ref{thm:s20}(v)).  The Chern-Simons line
bundle~\eqref{eq:sd25} and its various pullbacks fit into the diagram
  \begin{equation}\label{eq:24}
     \begin{gathered} \xymatrix@C-2ex{&\;\;(e') ^*\sL'\ar@<-.5ex>[dl]
     && \;\;e ^*\sL\ar@<-.5ex>[dl]\\ \hMTp
     \ar@[red]@<-.5ex>[ur]_<<<<<<<<<{\red\htao'}\ar[dd]^{e'}
     \;\ar@{^{(}->}[rr] &&  
     \hMT\ar@[red]@<-.5ex>[ur]_<<<<<<<<<{\red\htao}\ar[dd]^e \\
     &\sL'\ar@<-.5ex>[dl]&& \sL\ar[dl]\\ \MTp
     \;\ar@[blue]@<-.5ex>[ur]_<<<<<<<<<{\blue\tau '}
     \ar@{^{(}->}[rr] && \MT} \end{gathered} 
  \end{equation}
We explain the section~$\tau '$ of $\sL'\to \MTp$ after the proof of the
following.

  \begin{proposition}[]\label{thm:s28}
 \

 \begin{enumerate}[label=\textnormal{(\roman*)}]

 \item The curvature of the covariant derivative on~\eqref{eq:sd25} is 
  \begin{equation}\label{eq:sd24}
     \tpii\;\frac{d\mu _1}{\mu _1}\wedge \frac{d\mu _2}{\mu _2}.
  \end{equation}
The line bundle  
  \begin{equation}\label{eq:s75}
     \sL'\longrightarrow \MTp 
  \end{equation}
is flat.

 \item The line bundle~\eqref{eq:s75} has trivial holonomy.

 \end{enumerate} 
  \end{proposition}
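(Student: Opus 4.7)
The plan for part~(i) is to apply Theorem~\ref{thm:s20}(v) to the family $\MT\times T\to\MT$ with connection~$\eta$. Using~\eqref{eq:22} for $\Omega(\eta)$, a short computation of the wedge square gives $\Omega(\eta)\wedge\Omega(\eta) = -2\,\frac{d\mu_1}{\mu_1}\wedge\frac{d\mu_2}{\mu_2}\wedge d\theta^1\wedge d\theta^2$, and integrating over~$T$ with its standard orientation yields the asserted curvature $\tpii\,\frac{d\mu_1}{\mu_1}\wedge\frac{d\mu_2}{\mu_2}$ of~$\sL\to\MT$. Flatness of~$\sL'\to\MTp$ then follows by restriction: the defining equation $\mu_1+\mu_2=1$ forces $d\mu_2=-d\mu_1$, so $\frac{d\mu_1}{\mu_1}\wedge\frac{d\mu_2}{\mu_2}$ becomes proportional to $d\mu_1\wedge d\mu_1=0$.

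For part~(ii), since the target~$\Cx$ is abelian the holonomy factors through $H_1(\MTp;\ZZ)\cong\ZZ^2$, generated by loops~$\alpha$ and~$\beta$ around $\mu_1=0$ and $\mu_2=0$ respectively. The involution simultaneously swapping $(\mu_1,\mu_2)$ and $(\theta^1,\theta^2)$ preserves~$\eta$ and the spin structure~$\sigma$ and exchanges~$\alpha$ with~$\beta$, so it suffices to prove $H(\alpha)=1$. Parameterize~$\alpha$ by $\mu_1(t)=\epsilon e^{\tpi t}$ for small $\epsilon>0$ and lift to a path~$\tilde\alpha$ in~$\hMTp$ from $\tilde u_0=(\log\epsilon,\log(1-\epsilon))$ to $\tilde u_1=\tilde u_0+(\tpi,0)$, whose endpoints differ by the deck transformation~$(1,0)$.

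Working in the trivialization~$\htao'$, the holonomy $H(\alpha)$ decomposes as a product of two contributions. The first is the ratio $\htao'(\tilde u_1)/\htao'(\tilde u_0)$: Theorem~\ref{thm:s20}(vii) applied to the pair of sections of the~$\Cx$-bundle over $\{e'(\tilde u_0)\}\times T$ that~$\hto'$ induces at $\tilde u_0$ and $\tilde u_1$---which by~\eqref{eq:n20} are related by the gauge $h=e^{-\tpi\theta^1}$---gives an exponent $-\tfrac12\log(1-\epsilon)$, the spin sign $\epsilon_{t,h}$ being trivial because $(1,0)\in H^1(T;\zt)$ has vanishing quadratic refinement for the bounding spin structure; this yields a factor $(1-\epsilon)^{-1/2}$. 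The second contribution is the parallel transport of $\htao'(\tilde u_0)$ along~$\tilde\alpha$, governed by the connection 1-form $A=\frac{\sqmo}{4\pi}(u_2\,du_1-u_1\,du_2)$ from Theorem~\ref{thm:s20}(vi); the summand $\int_{\tilde\alpha} u_2\,du_1$ vanishes by the mean-value property for the holomorphic function $\log(1-w)$ on the circle $|w|=\epsilon$, and integration by parts gives $\int_{\tilde\alpha} u_1\,du_2=\tpi\log(1-\epsilon)$, so the parallel-transport factor is $(1-\epsilon)^{+1/2}$. The two factors cancel to yield $H(\alpha)=1$.

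The main obstacle is coherent sign bookkeeping: exact cancellation between the $(1-\epsilon)^{\pm 1/2}$ contributions hinges on the sign conventions in~\eqref{eq:s63}--\eqref{eq:s65}, the direction of the gauge relation read off from~\eqref{eq:n20}, and the identification of~$A$ versus~$-A$ as the connection form of the Chern-Simons covariant derivative all being aligned.
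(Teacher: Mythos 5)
Your argument is correct and follows the paper's proof in all essentials: part~(i) is the same application of Theorem~\ref{thm:s20}(v) to~\eqref{eq:22}, and for part~(ii) you use the same small loop around $\mu_1=0$, the same lift to~$\hMTp$, the same connection form~\eqref{eq:sd45}, and the same decomposition into parallel transport plus the deck-transformation correction~\eqref{eq:s64}, with the spin sign killed by $\sigma(1,0)=0$ from~\eqref{eq:45}. Two differences are worth flagging. First, where you evaluate both contributions exactly at finite~$\epsilon$ and exhibit a cancellation $(1-\epsilon)^{-1/2}\cdot(1-\epsilon)^{+1/2}=1$ --- which, as you admit, is hostage to getting every sign convention right --- the paper instead sends $\epsilon\to0$: the line integral~\eqref{eq:38} and the trivialization correction~\eqref{eq:39} each tend to~$0$ \emph{separately}, and flatness makes the holonomy independent of~$\epsilon$, so no sign-matching between the two terms is ever needed. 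Your exact evaluations ($\int u_2\,du_1=0$ by the mean-value property, $\int u_1\,du_2=\tpi\log(1-\epsilon)$ by parts) are correct and in fact sharpen the paper's estimates, but to close the finite-$\epsilon$ version you must still pin down whether parallel transport relative to~$\htao'$ is $e^{+\int A}$ or $e^{-\int A}$ and which endpoint plays the role of $t$ versus $t'$ in~\eqref{eq:s64}; the robust fact is only that the two factors are mutually inverse. Second, the involution simultaneously swapping $(\mu_1,\mu_2)$ and $(\theta^1,\theta^2)$ does \emph{not} preserve the spin torus: it reverses the orientation of~$T$ and carries $\sigma$ to its opposite, so (as in the proof of Theorem~\ref{thm:12}) it carries $\sL'$ to $(\sL')^{-1}$ rather than to~$\sL'$. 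Your reduction survives, since $H(\beta)=H(\alpha)^{-1}=1$, but the cleaner statement --- and the one the paper intends by ``symmetry $\mu_1\leftrightarrow\mu_2$'' --- is that the loop around $\mu_2=0$ is handled by the index-swapped copy of the same computation, with $\sigma(0,1)=0$ again killing the sign.
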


  \begin{proof}
 The curvature statement~(i) follows from~\eqref{eq:s62} and~\eqref{eq:22}.
We compute the holonomy about $(0,1)\in \MTp$ using the family of loops
(set $i=\sqmo$)
  \begin{equation}\label{eq:36}
  \begin{aligned}
     \Gamma _\epsilon \:[0,2\pi ]&\longrightarrow \quad \MTp\\
       t\quad &\longmapsto  (\epsilon e^{it},1-\epsilon e^{it})
  \end{aligned}
  \end{equation}
Since the curvature vanishes, the result is independent of~$\epsilon \in
(0,1/2)$.  The computation for the holonomy about $(1,0)\in \MTp$ follows by
symmetry $\mu _1\leftrightarrow\mu _2$.

The loop~\eqref{eq:36} lifts to the path $\hGe\:[0,2\pi ]\to\hMTp$ defined by 
  \begin{equation}\label{eq:37}
     \begin{aligned} u_1&=\log\epsilon +it \\ u_2&=\log(1-\epsilon
      e^{it}),\end{aligned} 
  \end{equation}
where for~$u_2$ choose the branch of the logarithm with $\log 1=0$.
Use~\eqref{eq:s63} to compute the connection form of $e^*\sL\to\hMT$ relative
to the trivialization~$\htao$ as
  \begin{equation}\label{eq:sd45}
     \frac{1}{4\pi \sqmo}(u_1du_2 - u_2du_1). 
  \end{equation}

To compute the holonomy integrate~\eqref{eq:sd45} along~$\hGe$, and then
use~\eqref{eq:s64} to correct for the change of trivialization between the
two endpoints $\bigl(\log\epsilon ,\log(1-\epsilon ) \bigr)$ and
$\bigl(\log\epsilon +2\pi i,\log(1-\epsilon ) \bigr)$.  Set $z=\mu
_1=e^{u_1}$ and compute
  \begin{equation}\label{eq:38}
     \int_{\hGe}u_1\,du_2 - u_2\,du_1 = -\int_{\Gamma _\epsilon
     }\frac{ \log\epsilon }{1- z}\,dz \;+\; \int_{0}^{2\pi }
     \frac{\epsilon te^{it}}{1-\epsilon e^{it}}\,dt \;-\; \int_{\Gamma
     _\epsilon }\frac{\log(1-\epsilon z)}{z}\,dz. 
  \end{equation}
The first and last terms vanish by Cauchy's theorem.  The integrand in the
second term has norm bounded above by~$4\pi \epsilon $, hence the integral
converges to zero as~$\epsilon \to0$.  

Turning to the change of trivialization, the gauge transformation is
multiplication by $h=e^{2\pi i\theta ^1}$ and so by~\eqref{eq:n17} the
integrand in~\eqref{eq:s64} is the 2-form
  \begin{equation}\label{eq:39}
     \bigl(\log\epsilon \,d\theta ^1 + \log(1-\epsilon )\,d\theta ^2
     \bigr)\wedge \bigl(\tpi\,d\theta ^1 \bigr) = -\tpi\log(1-\epsilon
     )\,d\theta ^1\wedge d\theta ^2; 
  \end{equation}
both it and its integral over~$T$ vanish in the limit~$\epsilon \to0$.
Finally, the quadratic function~\eqref{eq:s65} is~1 on the coordinate loop in
the $\theta ^1$-direction, by our choice\footnote{The value of the quadratic
function on the $\theta ^2$-coordinate loop enters the computation of
holonomy around $(0,1)\in \MTp$.  The holonomy of~\eqref{eq:s75} is not
trivial for any other spin structure on~$T$, but see Remark~\ref{thm:9}.} of
spin structure on~$T$.  Since the holonomy is independent of~$\epsilon $, and
the computation converges as~$\epsilon \to0$, that limit suffices to prove
that the holonomy is trivial.
  \end{proof}

It follows that \eqref{eq:s75}~admits a $\Cx$-torsor~$\sT'$ of flat nonzero
sections~$\tau '$.  The pullback $(e')^*\sL\to \hMTp$ has a canonical nonzero
section~$\htao'$ which is not flat; see~\eqref{eq:sd45}.  For each~$\tau '\in
\sT'$ define
  \begin{equation}\label{eq:25}
     \varphi =\frac{\htao'}{\tau '}\:\hMTp\longrightarrow \Cx. 
  \end{equation}
Varying $\tau '\in \sT'$ changes~$\varphi $ by a multiplicative constant.
From~\eqref{eq:sd45} compute
  \begin{equation}\label{eq:sd52}
     \frac{d\varphi }{\varphi } = \frac{1}{4\pi \sqmo}(u_1du_2 - u_2du_1)
  \end{equation}
for all~$\tau '\in \sT'$, where recall $e^{u_1} + e^{u_2}=1$.  Write
  \begin{equation}\label{eq:s76}
     \varphi = \exp\left(\frac{L}{\tpi}\right) 
  \end{equation}
to define a function 
  \begin{equation}\label{eq:26}
     L\:\hMTp\to \CZt.
  \end{equation}
Here we use the Tate twists
  \begin{equation}\label{eq:29}
  \begin{aligned}
     \Zo&=\tpi\ZZ\\
     \Zt&=\Zo^{\otimes 2}=4\pi ^2\ZZ .
  \end{aligned}
  \end{equation}
The function~$L$ is determined up to an additive constant: the choice
of~$\tau '\in \sT'$.  Any choice satisfies
  \begin{equation}\label{eq:27}
     dL = \frac{u_1du_2 - u_2du_1}{2}. 
  \end{equation}

  \begin{remark}[]\label{thm:9}
 Our construction uses a particular spin structure on the torus~$T$ and the
particular choice of lagrangian submanifold~$\MTp\subset \MT$.  There are
three other spin structures and correlated choices of lagrangians which lead
to variations of the enhanced Rogers dilogarithm.  We use all four functions
in~\cite{FN}. 
  \end{remark}

  \subsection{Dilogarithms}\label{subsec:4.2}

We refer to~\cite{G,K,Z} and the references therein for details about the
various dilogarithm functions.
 
The theory begins with the power series 
  \begin{align}
      -\log(1-z) &= \sum\limits_{n=1}^{\infty} \frac{z^n}{n} \label{eq:30}\\ 
      \Li_2(z) &= \sum\limits_{n=1}^{\infty} \frac{z^n}{n^2}, \label{eq:31}
  \end{align}
convergent for~$z\in \CC$ satisfying~$|z|<1$, and analytically continued to
$z\in \CC\setminus [1,\infty )$.  Equation~\eqref{eq:30} is an identity;
equation~\eqref{eq:31} defines the \emph{Spence dilogarithm}~$\Li_2$.
Differentiate the power series:
  \begin{equation}\label{eq:32}
     d\Li_2(z) \;=\; -\frac{\log(1-z)}{z}\,dz \;=\; -u_2\,du_1 \;=\;
     -\frac{u_2\,du_2}{1-e^{-u_2}}, 
  \end{equation}
using notation from~\eqref{eq:19}, \eqref{eq:28} and setting~$z=\mu _1$.  The
last expression is a meromorphic 1-form on the $u_2$-line with simple poles
at $\Zo\subset \CC$ and residues in~$\Zo$.  It follows that 
  \begin{equation}\label{eq:33}
     \begin{aligned} F\:\CC\setminus \Zo&\longrightarrow \CZt \\
      u_2\quad &\longmapsto \Li_2(1-e^{u_2})\end{aligned} 
  \end{equation}
is a well-defined function.  We lift it under the $\ZZ$-covering map 
  \begin{equation}\label{eq:34}
     \begin{aligned} \hMTp\;\;&\longrightarrow \CC\setminus \Zo \\
      (u_1,u_2)&\longmapsto \quad u_2\end{aligned} 
  \end{equation}
Then 
  \begin{equation}\label{eq:35}
     F(u_2) + \frac 12u_1u_2 \;=\; \Li_2(z) + \frac 12\log(z)\log(1-z)\quad
     \mod\Zt 
  \end{equation}
is a well defined function $\hMTp\to\CZt$, and from~\eqref{eq:32} its
differential equals~$dL$ in~\eqref{eq:27}.  Therefore,
\eqref{eq:35}~equals~$L$ up to an additive constant.  (Recall that in any
event $L$~is only defined up to an additive constant.)  The
function~\eqref{eq:35} is called the \emph{enhanced Rogers dilogarithm}.   It
is denoted~`$\widehat{D}$' in~\cite{Z}.
 
This discussion proves the following. 

  \begin{theorem}[]\label{thm:8}
 The function~$L$ in~\eqref{eq:26}, defined up to a constant using spin
Chern-Simons theory with gauge group~$\Cx$, equals the enhanced Rogers
dilogarithm. 
  \end{theorem}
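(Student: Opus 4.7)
The plan is to identify $L$ with the enhanced Rogers dilogarithm by matching differentials on the connected cover $\hMTp$, using the fact that $L$ is only defined up to a constant. The construction in \S\ref{subsec:4.1} gives the key identity \eqref{eq:27}, namely $dL = \tfrac12(u_1\,du_2 - u_2\,du_1)$, so the entire burden is to exhibit a well-defined function $\hMTp \to \CZt$ whose differential matches, and to verify that this function has the claimed closed form.

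First, I would check that the formula $u_2 \mapsto \Li_2(1 - e^{u_2})$ descends to a well-defined function $F\:\CC \setminus \Zo \to \CZt$. The map $u_2 \mapsto 1-e^{u_2}$ sends $\CC\setminus\Zo$ to $\CC\setminus\{0\}\setminus[1,\infty)$ only up to the natural branch ambiguities, so the precise statement is that the meromorphic 1-form $-\log(1-z)/z\,dz$, pulled back to the $u_2$-line, is a closed meromorphic 1-form with simple poles at $u_2 \in \Zo$; a direct residue computation shows the residue at $u_2 = 2\pi\sqmo\,n$ equals $-2\pi\sqmo\,n$, so the periods lie in $\Zt = 4\pi^2\ZZ$. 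Hence the antiderivative is a well-defined map into $\CZt$, matching \eqref{eq:33}.

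Next, I would pull $F$ back under the $\ZZ$-covering $(u_1,u_2) \mapsto u_2$ of \eqref{eq:34} to obtain $F\:\hMTp \to \CZt$, and form the combined expression
\begin{equation*}
G(u_1,u_2) \;=\; F(u_2) + \tfrac12 u_1 u_2 \;\in\; \CZt.
\end{equation*}
A short calculation using \eqref{eq:32} gives $dG = -u_2\,du_1 + \tfrac12(u_1\,du_2 + u_2\,du_1) = \tfrac12(u_1\,du_2 - u_2\,du_1)$, which is exactly $dL$ by \eqref{eq:27}. Since $\hMTp$ is connected and $L$ is only determined up to an additive constant (the choice of $\tau' \in \sT'$), it follows that $L = G$ in $\CZt$ for some, hence every, allowed choice of the constant. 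Identifying $G$ with the standard expression $\Li_2(z) + \tfrac12 \log(z)\log(1-z)$ via $z = \mu_1 = e^{u_1}$ and $1-z = \mu_2 = e^{u_2}$ completes the identification.

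The only genuinely delicate point is the modular arithmetic: one must consistently work modulo $\Zt$ both on the dilogarithm side (to handle the multivaluedness of $\Li_2$ and of $\log(z)\log(1-z)$) and on the Chern-Simons side (where $L$ takes values in $\CZt$ by construction \eqref{eq:26}). The residue computation above is what ensures these two $\Zt$-ambiguities line up, so once that is in hand, the theorem reduces to the differential comparison, which is immediate from \eqref{eq:27} and \eqref{eq:32}.
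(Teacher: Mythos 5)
Your proposal is correct and follows essentially the same route as the paper: both arguments establish well-definedness of $F(u_2)=\Li_2(1-e^{u_2})$ modulo $\Zt$ via the residues of $-u_2\,du_2/(1-e^{-u_2})$ at $\Zo$, then match the differential of $F(u_2)+\tfrac12 u_1u_2$ against \eqref{eq:27} on the connected cover $\hMTp$. Your explicit residue computation and the remark on connectedness are just slightly more detailed versions of steps the paper leaves implicit.
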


   \section{Properties of the dilogarithm}\label{sec:5}
 
In this section we use the geometric construction of the enhanced Rogers
dilogarithm (\S\ref{subsec:4.1}) to derive a few of its standard properties
directly from spin Chern-Simons theory. 
 
Consider first the transformation law under a deck transformation of the
$\ZZ^2$-covering map $e'\:\hMTp\to\MTp$.  Fix $\tau '\in \sT'$ and so a
choice of~$\varphi ,L$; see~\eqref{eq:25} and~\eqref{eq:s76}. 

  \begin{theorem}[]\label{thm:11}
 For all $(u_1,u_2)\in \hMTp$ and $(n_1,n_2)\in \ZZ^2$,
  \begin{equation}\label{eq:42}
     L(u_1+2\pi in_1,u_2+2\pi in_2) = L(u_1,u_2) + \pi i(n_1u_2-n_2u_1) +
     2\pi ^2n_1n_2\qquad \mod\Zt. 
  \end{equation}
  \end{theorem}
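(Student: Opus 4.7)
The plan is to compute the ratio $\varphi(D(u_1,u_2))/\varphi(u_1,u_2)$, where $D\:\hMTp\to\hMTp$ is the deck transformation $(u_1,u_2)\mapsto(u_1+\tpi n_1,u_2+\tpi n_2)$, and then translate via $\varphi=\exp(L/\tpi)$. Because $e'\circ D=e'$ and $\tau'$ is an honest flat section of $\sL'\to\MTp$, the pulled-back section $(e')^{*}\tau'$ takes the same value at $(u_1,u_2)$ and $D(u_1,u_2)$; all the action comes from $\htao'$, whose change is governed by Theorem~\ref{thm:s20}(vii). Reading off the $\ZZ^2$-action~\eqref{eq:n20}, after identifying the two trivial bundles by descent to $\MT\times T$, the canonical trivializations $\hto|_{(u_1,u_2)}$ and $\hto|_{D(u_1,u_2)}$ of the descended bundle $\sQ|_{\{e(u_1,u_2)\}\times T}$ differ by the gauge transformation $h\:T\to\Cx$, $h(\theta)=\exp[\tpi(n_1\theta^1+n_2\theta^2)]$.

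Plugging into~\eqref{eq:s64}, the integrand is (using~\eqref{eq:n17})
\begin{equation*}
\hto^{*}\he\wedge\tfrac{dh}{h}=(-u_1\,d\theta^1-u_2\,d\theta^2)\wedge\tpi(n_1\,d\theta^1+n_2\,d\theta^2)=\tpi(n_1u_2-n_2u_1)\,d\theta^1\wedge d\theta^2,
\end{equation*}
whose integral over~$T$, multiplied by $-\sqmo/(4\pi)$, gives $\tfrac12(n_1u_2-n_2u_1)$. For the sign $\epsilon_{\hto,h}$ from~\eqref{eq:s65}, the mod-$2$ class of~$h$ is $n_1 e_1+n_2 e_2\in H^1(T;\zt)$; the proof of Proposition~\ref{thm:s28} records that the chosen spin structure has $\sigma(e_i)=0$ (so the ``quadratic function''~\eqref{eq:s65} equals $+1$ on each coordinate loop), and the refinement identity $\sigma(x+y)=\sigma(x)+\sigma(y)+x\cdot y$ then gives $\sigma(n_1 e_1+n_2 e_2)=n_1 n_2\pmod 2$, whence $\epsilon_{\hto,h}=(-1)^{n_1 n_2}$. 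Assembling these,
\begin{equation*}
\varphi(D(u_1,u_2))=(-1)^{n_1 n_2}\exp\!\left(\tfrac{n_1 u_2-n_2 u_1}{2}\right)\varphi(u_1,u_2).
\end{equation*}

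Writing $(-1)^{n_1 n_2}=\exp(\sqmo\pi n_1 n_2)$, taking logarithms of $\varphi=\exp(L/\tpi)$, and multiplying through by~$\tpi$, I would get the linear term as $\pi\sqmo(n_1 u_2-n_2 u_1)$ exactly and the constant as $\tpi\cdot\sqmo\pi n_1 n_2=-2\pi^2 n_1 n_2$, which equals $2\pi^2 n_1 n_2$ modulo $(\tpi)^2=-4\pi^2$, i.e.\ modulo $\Zt$. This is exactly the claimed identity. The delicate steps are pinning down the spin-structure input to $\epsilon_{\hto,h}$ (one must identify the paper's $\sigma=0$ convention for the bounding spin structure on each coordinate circle) and the final mod-$\Zt$ collapse $-2\pi^2 n_1 n_2\equiv 2\pi^2 n_1 n_2$; beyond this bookkeeping, the argument is a direct application of~\eqref{eq:s64}.
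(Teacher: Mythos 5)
Your proof is correct and follows essentially the same route as the paper: since $\tau'$ is pulled back along $e'$, the deck transformation only changes the trivialization $\htao'$, and the transformation law then comes from \eqref{eq:s64} with the gauge transformation $h=\exp[\tpi(n_1\theta^1+n_2\theta^2)]$ read off from \eqref{eq:n20}, yielding the linear term from the integral and the constant term from the sign $\epsilon_{t,h}=(-1)^{\sigma(n_1,n_2)}=(-1)^{n_1n_2}$. Your explicit derivation of $\sigma(n_1,n_2)=n_1n_2$ from the quadratic-refinement identity, and the final observation that $-2\pi^2n_1n_2\equiv 2\pi^2n_1n_2\bmod\Zt$, match the paper's (more tersely stated) argument.
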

 
\noindent 
 Equation~\eqref{eq:42} appears in~\cite[p.~25]{Z} and as~\cite[(A.18)]{APP}.

  \begin{proof}
 Since the section~$\tau '$ of $(e')^*\sL'\to\hMTp$ is pulled back via
$e'\:\hMTp\to\MTp$, the change in $\varphi =\htao'/\tau '$ under a deck
transformation of~$e'$ is due to the change in the trivialization~$\htao'$.
Use the general formula~\eqref{eq:s64} to compute.  From~\eqref{eq:n17} the
connection form relative to the trivialization at $(u_1,u_2)\in \hMTp$ is 
  \begin{equation}\label{eq:43}
     u_1\,d\theta ^1 + u_2\,d\theta ^2, 
  \end{equation}
and the relevant gauge transformation is multiplication by the function 
  \begin{equation}\label{eq:44}
     h=\exp\bigl(2\pi i(n_1\theta ^1 + n_2\theta ^2) \bigr). 
  \end{equation}
The integral in~\eqref{eq:s64} is then $\frac 12(n_1u_2-n_2u_1)$; with the
correct prefactor it contributes the second term in~\eqref{eq:42}.  The third
term derives from the factor~\eqref{eq:s65} in~\eqref{eq:s64}.  The quadratic
form~$\sigma $ for our choice of spin structure on~$T$ is
  \begin{equation}\label{eq:45}
     \sigma (n_1,n_2) = n_1n_2\pmod 2, 
  \end{equation}
where $(n_1,n_2)\in \ZZ^2$~is the homotopy class of~$h$ in~$H^1(T;\ZZ)$,
relative to the standard basis.
  \end{proof}
 
Next, we prove a reflection identity, which appears as~\cite[(A.21)]{APP}
and, for a restriction of~$L$, in~\cite[p.~23]{Z}.

  \begin{theorem}[]\label{thm:12}
 For the diffeomorphism 
  \begin{equation}\label{eq:46}
     \begin{aligned} \Psi \:\hMTp\;\;&\longrightarrow \;\;\hMTp \\
      (u_1,u_2)&\longmapsto (u_2,u_1)\end{aligned} 
  \end{equation}
the sum $L+\Psi ^*L$ is a constant function. 
  \end{theorem}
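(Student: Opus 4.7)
The plan is to invoke the differential formula~\eqref{eq:27} for $dL$ and observe that $\Psi$ negates it. Pulling back $(u_1\,du_2 - u_2\,du_1)/2$ along $\Psi(u_1,u_2) = (u_2,u_1)$ yields $(u_2\,du_1 - u_1\,du_2)/2 = -dL$. Hence $d(L + \Psi^*L) = 0$ on $\hMTp$ as a $\CC$-valued 1-form, so $L + \Psi^*L$ is locally constant as a $\CZt$-valued function (any two local $\CC$-valued lifts of such a function differ by a constant in $\Zt$, so $d$ descends unambiguously).

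To upgrade local constancy to a global constant I verify that $\hMTp$ is connected. The diffeomorphism~\eqref{eq:28} identifies $\MTp$ with $\CP^1 \setminus \{0,1,\infty\}$, whose fundamental group is free on generators $\gamma_0, \gamma_1$ encircling $z = 0, 1$. Under the $\ZZ^2$-cover $e' \: \hMTp \to \MTp$, these loops have monodromies $(1,0)$ and $(0,-1)$ respectively (since $\mu_1 = z$ winds once around $0$ along $\gamma_0$ while $\mu_2 = 1-z$ winds once around $0$ with reversed orientation along $\gamma_1$). The monodromy image is therefore all of $\ZZ^2$, so $\hMTp$ is connected, and every locally constant function on it is constant.

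The main ``obstacle'' is essentially nonexistent: formula~\eqref{eq:27} packages the spin Chern-Simons content into a single equation from which the reflection identity is immediate. The only real point of care is the connectedness verification above, without which ``locally constant'' does not imply the desired global statement. A more conceptual alternative would apply Theorem~\ref{thm:17} to the orientation-reversing but spin-preserving swap $(\theta^1,\theta^2) \mapsto (\theta^2,\theta^1)$ on $T$ together with $\Psi$ on $\MT$, producing a canonical flat isomorphism identifying $\sL \otimes \Psi^*\sL$ with the trivial line bundle and matching $\htao \otimes \Psi^*\htao$ with~$1$; this would recover the identity (and in principle pin down the constant) more structurally. However this route first requires extending Theorem~\ref{thm:17} to handle orientation reversal of the fiber, which the paper leaves to the reader, so the direct differential argument is preferable.
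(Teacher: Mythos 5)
Your proof is correct, but it takes a genuinely different route from the paper's. The paper proves the reflection identity geometrically: it lifts the coordinate swap $\psi\:(\theta ^1,\theta ^2)\mapsto (\theta ^2,\theta ^1)$ of the torus $T$ to the universal bundles of~\eqref{eq:23}, notes that $\psi$ reverses orientation (sending the spin structure to its opposite), and invokes functoriality of spin Chern--Simons (Theorem~\ref{thm:17}) to conclude that the induced map on Chern--Simons lines carries $\sL$ to $\sL\inv$, whence $\Psi ^*\varphi =\varphi \inv$ up to a multiplicative constant. You instead differentiate: $\Psi ^*\bigl(\tfrac12(u_1\,du_2-u_2\,du_1)\bigr)=-\tfrac12(u_1\,du_2-u_2\,du_1)$, so $d(L+\Psi ^*L)=0$ by~\eqref{eq:27}, and you correctly supply the one nontrivial ingredient this requires, namely connectedness of $\hMTp$ via surjectivity of the monodromy $\pi _1(\MTp)\to\ZZ^2$. (Minor slip: the loop about $z=1$ has monodromy $(0,+1)$, not $(0,-1)$, since $z\mapsto 1-z$ is holomorphic and preserves winding; this is immaterial, as either sign generates.) Your route is shorter and more elementary, at the cost of bypassing the geometric content that is the point of \S\ref{sec:5} --- formula~\eqref{eq:27} already encodes the Chern--Simons input, so once it is granted the identity is classical calculus --- and of giving no structural handle on the value of the constant; the paper's argument, by contrast, exhibits the identity as a symmetry of the theory and is the template for the variants in Remark~\ref{thm:13}, where the relevant diffeomorphism moves the spin structure and a purely differential argument would not see the difference.
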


Of course, the result holds for all~$\tau '\in \sT'$. 

  \begin{proof}
 Consider the reflection diffeomorphism 
  \begin{equation}\label{eq:47}
     \begin{aligned} \psi \:T\;\;\;&\longrightarrow \;\;\;T \\ (\theta
     ^1,\theta ^2)&\longmapsto (\theta ^2,\theta ^1)\end{aligned} 
  \end{equation}
on the torus~$T$.  It reverses orientation, maps our chosen spin
structure~$\sigma $ to its opposite,\footnote{A spin structure on an
$n$-dimensional oriented Riemannian manifold~$M$ is encoded in a sequence 
  \begin{equation}\label{eq:82}
     P\xrightarrow{\;\;\rho \;\;}\SO(M)\xrightarrow{\;\;\pi \;\;}M 
  \end{equation}
in which $\pi $~is the principal $\SO_n$-bundle of oriented orthonormal
frames and $\pi \circ \rho $~is a principal $\Spin_n$-bundle .  The
\emph{opposite spin structure} $P'\xrightarrow{\;\rho
'\;}\SO'(M)\xrightarrow{\;\pi '\;}M$ is constructed by taking the complement
of~\eqref{eq:82} in 
  \begin{equation}\label{eq:83}
     P\times \mstrut _{\Spin_n}\Pin_n\longrightarrow \O(M)\longrightarrow
     M, 
  \end{equation}
where either pin group $\Pin^{\pm}_n$ can be used.  Here $\O(M)\to M$ is the
principal $\O_n$-bundle of orthonormal frames.} induces the reflection $(\mu
_1,\mu _2)\mapsto(\mu _2,\mu _1)$ on the moduli space~$\MT$ of flat
$\Cx$-connections, which lifts to the reflection $(u_1,u_2)\mapsto(u_2,u_1)$
on~$\hMT$, which in turn restricts to~\eqref{eq:46} on~$\hMTp$.  Use
formulas~\eqref{eq:n17} and~\eqref{eq:n20} to compatibly lift the involution
to the bundles in~\eqref{eq:23}; the lift preserves the universal
connections.  The functoriality of Chern-Simons (Theorem~\ref{thm:17})
implies that the involution lifts to the Chern-Simons line bundles with
covariant derivative in~\eqref{eq:24}, except because orientation on~$T$ is
reversed the bundle~$\sL$ is mapped to~$\sL\inv $.  It follows that $\Psi
^*\varphi =\varphi \inv $, up to a multiplicative constant.
  \end{proof}

  \begin{remark}[]\label{thm:13}
 A related identity states that $L(z)+L(1/z)$ is constant;
see~\cite[(A.24)]{APP} for the precise form on the cover~$\hMTp$.  It can be
proved by a similar method, but the relevant diffeomorphism of~$T$ does not
preserve the spin structure, so one needs to expand the theory as indicated
in Remark~\ref{thm:9}.
  \end{remark}

Finally, we prove the 5-term relation satisfied by the dilogarithm~\cite{Z},
\cite[(A.8)]{APP}.  For that we change notation and use~$u,v$ in place of
$u_1,u_2$ as the standard coordinates on $\hMT=\CC^2$.

  \begin{theorem}[]\label{thm:14}
 The sum $L(u_1,v_1) + \cdots + L(u_5,v_5)$ is independent of~$(u_i,v_i)\in
\hMTp$, $i\in \zmod5$, which satisfy 
  \begin{equation}\label{eq:48}
     v_i = u_{i-1} + u_{i+1}\qquad \textnormal{for all $i$}. 
  \end{equation}
  \end{theorem}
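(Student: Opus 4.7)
The plan is to show that the one-form $d\!\bigl(\sum_{i\in\zmod 5} L(u_i,v_i)\bigr)$ vanishes identically on the configuration subvariety of $(\hMTp)^5$ cut out by~\eqref{eq:48}, and then to check that the resulting locally constant value does not depend on the connected component. The only ingredient from spin Chern-Simons theory is the derivative formula~\eqref{eq:27}; no additional bordism construction is needed.

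For local constancy, apply~\eqref{eq:27} termwise:
\[
dL(u_i,v_i) \;=\; \tfrac{1}{2}\bigl(u_i\,dv_i - v_i\,du_i\bigr).
\]
Substituting $v_i = u_{i-1}+u_{i+1}$ and $dv_i = du_{i-1}+du_{i+1}$ gives
\[
\sum_{i\in\zmod 5} dL(u_i,v_i) \;=\; \tfrac{1}{2}\sum_{i}\bigl[u_i(du_{i-1}+du_{i+1}) - (u_{i-1}+u_{i+1})\,du_i\bigr].
\]
Reindexing $j = i-1$ in $\sum_i u_i\,du_{i-1}$ and $j = i+1$ in $\sum_i u_i\,du_{i+1}$ converts the positive contribution to $\sum_j (u_{j+1}+u_{j-1})\,du_j$, which matches the negative contribution term-by-term, so the right-hand side vanishes.

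For global constancy I would parametrize the configuration variety. Writing $z_i = e^{u_i}$, the $\hMTp$-constraint becomes $e^{v_i} = 1 - z_i$, and $v_i = u_{i-1}+u_{i+1}$ exponentiates to the pentagon recursion $z_{i-1}z_{i+1} = 1 - z_i$; starting from $(z_1,z_2)$ in a suitable open subset of $(\CC\setminus\{0,1\})^2$, this determines $(z_3,z_4,z_5)$ with cyclic closure a classical identity. The $z$-image of the configuration variety is thus connected, and the variety itself is a covering of it, so $\sum_i L(u_i,v_i)$ is constant on each sheet. To see the constant is the same on every sheet I would invoke Theorem~\ref{thm:11}: the constraint~\eqref{eq:48} forces the integer period shifts $(n_i,m_i)$ in a deck transformation to satisfy $m_i = n_{i-1}+n_{i+1}$, and with this relation the linear and quadratic corrections in~\eqref{eq:42}, summed over $i \in \zmod 5$, should vanish modulo~$\Zt$.

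The main obstacle is this final sheet-comparison step. The linear terms in~\eqref{eq:42} cancel by exactly the same telescoping that killed the differential above, but the quadratic terms $2\pi^2 n_i m_i = 2\pi^2 n_i(n_{i-1}+n_{i+1})$ must be checked to sum to an element of~$\Zt$, which is an elementary combinatorial identity on~$\ZZ^5$ that I expect but have not yet verified. Once this is in hand the theorem is complete; the differential calculation itself is routine given~\eqref{eq:27}, and all the geometric input sits inside the construction of~$L$ in~\S\ref{subsec:4.1}.
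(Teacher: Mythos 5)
Your argument is correct, and it is genuinely different from the paper's. The paper proves the 5-term relation by exhibiting a compact spin 3-manifold $X$ (the complement of a 5-chain link in $S^3$) whose boundary restriction map on flat $\Cx$-connections has image cut out exactly by~\eqref{eq:48}; flatness of the spin Chern-Simons section $\thS{X}{\Theta}$ over $\MXp$ (Theorem~\ref{thm:s20}(iii),(iv)) then forces the product of the five exponentiated dilogarithms to be constant, and in fact equal to~$1$ for a preferred $\tau'$ (Remark~\ref{thm:16}). You instead work entirely on the configuration variety: the telescoping cancellation of $\sum_i dL(u_i,v_i)$ under $dv_i=du_{i-1}+du_{i+1}$ gives local constancy, connectivity of the $z$-variety (Remark~\ref{thm:15}) reduces everything to comparing sheets, and two lifts over the same base point automatically differ by a deck transformation with $m_i=n_{i-1}+n_{i+1}$, so Theorem~\ref{thm:11} finishes the job. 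The one step you left unverified does hold: the linear corrections cancel by the same telescoping, and
\begin{equation*}
2\pi^2\sum_{i\in\zmod5} n_i\bigl(n_{i-1}+n_{i+1}\bigr)\;=\;4\pi^2\sum_{i\in\zmod5} n_i n_{i+1}\;\in\;4\pi^2\ZZ=\Zt,
\end{equation*}
so the quadratic corrections vanish modulo~$\Zt$. What the two routes buy: yours is shorter and needs no new geometry beyond what \S\ref{sec:4}--\S\ref{sec:5} already established (though note that Theorem~\ref{thm:11} itself encodes the spin Chern-Simons input, so the geometry is repackaged rather than eliminated); the paper's route explains \emph{why} the constraint~\eqref{eq:48} is the natural one (it is the image of $\har$), fits the uniform bordism-theoretic philosophy, and additionally identifies the constant as zero for a canonical normalization of~$\tau'$, which your argument does not determine.
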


  \begin{remark}[]\label{thm:15}
 Write $z_i=e^{u_i}$ and assume $e^{u_i}+e^{v_i}=1$.  Then equation~ \eqref{eq:48}
implies
  \begin{equation}\label{eq:49}
     1-z_i = z_{i-1}z_{i+1}. 
  \end{equation}
There is a connected complex 2-manifold~$\MXp$ of solutions 
  \begin{equation}\label{eq:50}
     x\;,\;\frac{1-x}{1-xy}\;,\; \frac{1-y}{1-xy}\;,\; y\;,\; 1-xy 
  \end{equation}
to~\eqref{eq:49}, parametrized by $x,y\in \CC$ satisfying $xy\neq 0$, $x\neq
1$, $y\neq 1$, and $xy\neq 1$.
  \end{remark}

  \begin{figure}[ht]
  \centering
  \includegraphics[scale=.7]{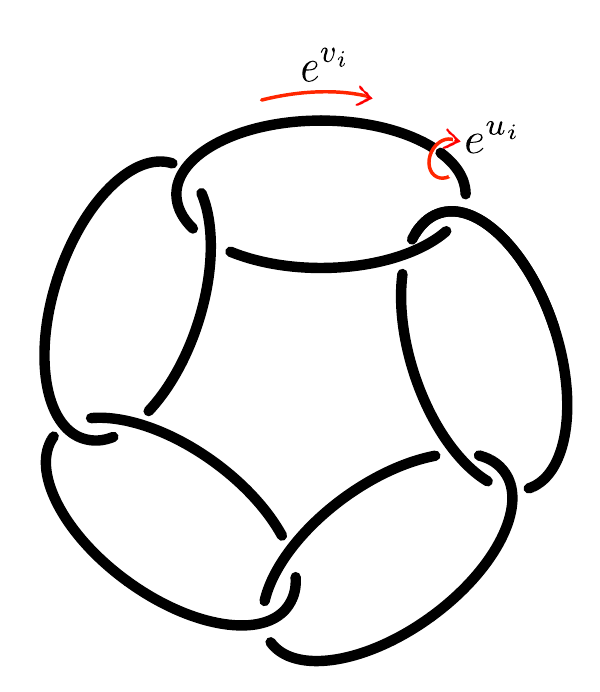}
  \vskip -.5pc
  \caption{$X$~is $S^3$ with these five solid tori removed}\label{fig:1}
  \end{figure}

  \begin{proof}
 Let $X$~be the compact spin 3-manifold with boundary formed from~$S^3$ by
removing a tubular neighborhood of the 5-component link
depicted\footnote{This link is a close cousin to the ``minimally twisted
5-chain link'' in~\cite[\S2.6]{DT}.  (We thank Ian Agol for bringing this
reference to our attention.)  } in Figure~\ref{fig:1}.  Fix a diffeomorphism
$\partial X\approx T^{\,\amalg 5}$ which induces the basis of first homology
indicated in the figure.  Alexander duality implies $H_1(X)$~is torsionfree of
rank~5, so if a principal flat $\Cx$-bundle over~$X$ admits a flat connection
then it is trivializable.  Let $\MX$ be the moduli space of flat
$\Cx$-connections on~$X$ and $r\:\MX\to(\MT)^5$ the restriction map to the
boundary~$\partial X$.  Define $\MXp=r\inv \bigl[(\MTp)^5 \bigr]$.
(Remark~\ref{thm:15} gives an explicit parametrization of~$\MXp$.)
Similarly, let $\hMX$ be the moduli space of flat $\Cx$-connections with a
homotopy class of trivialization, $\har\:\hMX\to(\hMT)^5$ restriction to the
boundary, and $\hMXp=\hr\inv \bigl[(\hMTp)^5 \bigr]$.  Each component of the
link is the outer boundary of a neatly embedded disk~$D_i\subset X$ with two
subdisks removed, as in Figure~\ref{fig:2}.  For any collection $(u_i,v_i)\in
\hMT$ in the image of~$\har$, apply Stokes' theorem to the closed 1-form
on~$D_i$ which is the flat connection form relative to the trivialization.
Its integral over a boundary component is minus the log holonomy.  Therefore,
the relation~\eqref{eq:48} holds on the image of~$\har$.

  \begin{figure}[H]
  \centering
  \includegraphics[scale=.75]{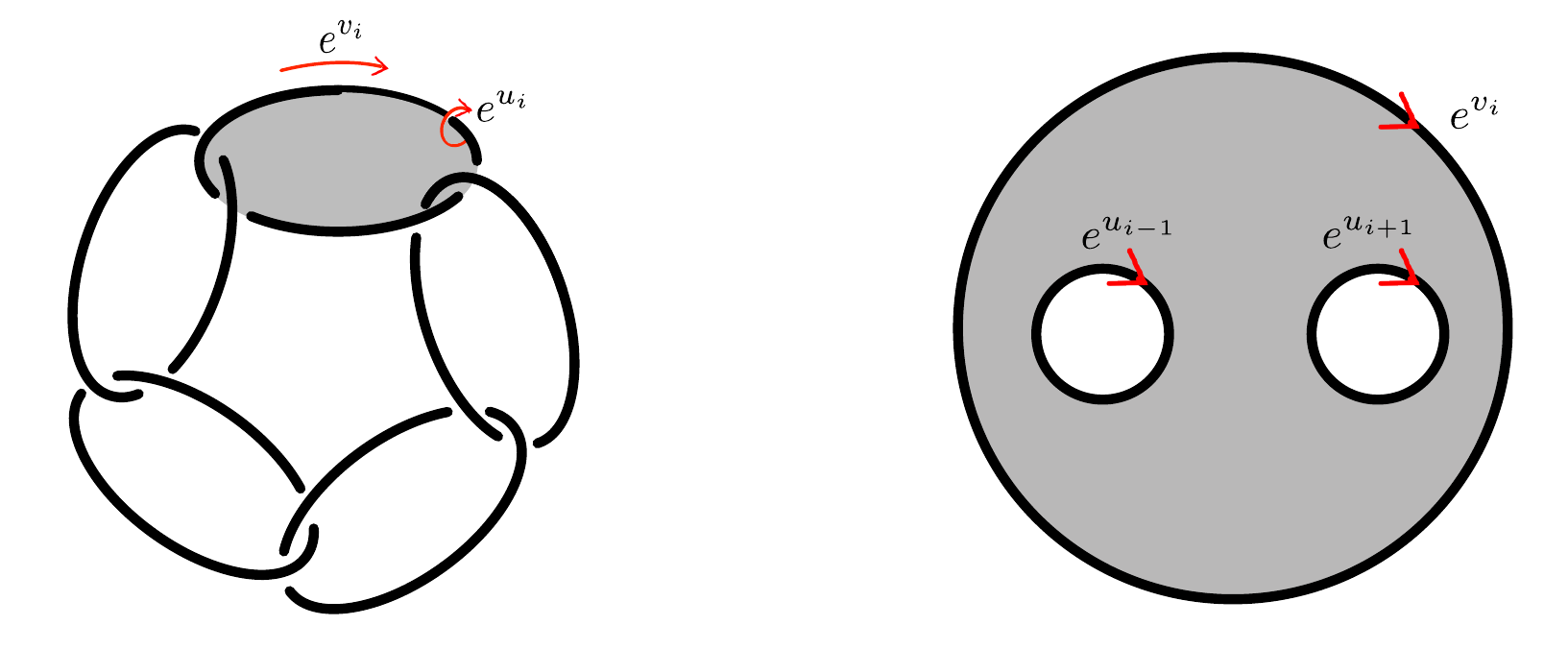}
  \vskip -.5pc
  \caption{A 2-manifold neatly embedded in~$X$}\label{fig:2}
  \end{figure}

Spin Chern-Simons theory~$\rS$ produces the diagram 
  \begin{equation}\label{eq:51}
     \begin{gathered} \xymatrix@C-2ex{&\;\;(\har') ^*(\hsL')^{\boxtimes
     5}\ar@<-.5ex>[dl] && \;\;(\hsL')^{\boxtimes 5}\ar@<-.5ex>[dl]\\ \hMXp
     \ar[dd]^{e_X'}
     \;\ar[rr]^{\har'} &&
     (\hMTp)^5\ar@[red]@<-.5ex>[ur]_<<<<<<<<<{\red(\htao)^{\boxtimes
     5}}\ar[dd]^{(e_T')^5} \\ 
     &\quad\quad(r')^*(\sL')^{\boxtimes 5}\ar@<-.5ex>[dl]&& (\sL')^{\boxtimes
     5}\ar@<-.5ex>[dl]\\ \MXp 
     \;\ar@[blue]@<-.5ex>[ur]_<<<<<<<<<{\blue\tsm} \ar[rr]^{r'} &&
     (\MTp)^5\ar@[blue]@<-.5ex>[ur]_<<<<<<<<<{\blue(\tau ')^{\boxtimes 5}}}
     \end{gathered} 
  \end{equation}
where $\hsL'=(e_T')^*\sL'$ is the pullback of the spin Chern-Simons line
bundle under $e_T'\:\hMTp\to \MTp$.  Apply Theorem~\ref{thm:s20}(iii) to
obtain a section $\ssm$ of $(r')^*(\sL')^{\boxtimes 5}\to\MXp$;
by~\eqref{eq:40} it is flat.  There exists $\tau '\in \sT'$ a flat section of
$\sL'\to\MTp$, unique up to a $5^{\textnormal{th}}$~root of unity, such that
$\ssm=(r')^*\bigl[(\tau ')^{\boxtimes 5}\bigr]$.  Then \eqref{eq:41}~implies
that the pullbacks of~$\ssm$ and~$(\htao)^{\boxtimes 5}$ to~$\hMXp$ agree.
Recalling~\eqref{eq:25}, we see that the ratio $(\htao/\tau
')^{\boxtimes5}\:(\hMTp)^5\to\Cx$ is the product of five exponentiated
enhanced Rogers dilogarithms, and the fact that its pullback to~$\hMXp$ is
identically one is the 5-term relation.
  \end{proof}

  \begin{remark}[]\label{thm:16}
 The choice of~$\tau '\in \sT'$ in the proof makes the 5-term sum in the
theorem vanish. 
  \end{remark}

\appendix

   \section{Classical spin Chern-Simons theory}\label{sec:6}
 
In this appendix we sketch proofs of Theorem~\ref{thm:s20} and
Theorem~\ref{thm:17}, which set out basic properties of spin Chern-Simons
theory with gauge group~$\Cx$.  Theorem~\ref{thm:4} can be proved by similar
methods, or may be deduced as a corollary of the spin case; we give an
argument for part~(i) at the end.  In this appendix we freely use generalized
differential cohomology, as developed in \cite{HS,BNV} and other references.
To begin we work with the universal family of $\Cx$-connections as described
in~\cite{FH}; the parameter ``space''\footnote{It is, rather, a simplicial
sheaf on the site of smooth manifolds.  Computations may be carried out on a
smooth ``test'' manifold~$M$ equipped with a $\Cx$-connection, that is,
equipped with a map $M\to \BNG$.} is~$\BNG$.

  \begin{remark}[]\label{thm:18}
 An alternative approach to spin Chern-Simons for \emph{compact} gauge groups
is available using $\eta $-invariants and pfaffian lines of Dirac
operators~\cite{J}.  For a noncompact group one encounters Dirac operators
coupled to non-unitary connections, for which the relevant parts of geometric
index theory are not in the literature.
  \end{remark}

Let $E$~be the spectrum (in the sense of stable homotopy theory) defined as
the extension
  \begin{equation}\label{eq:54}
     H\ZZ\longrightarrow E\longrightarrow \Sigma ^{-2}H\zt 
  \end{equation}
of Eilenberg-MacLane spectra with nonzero $k$-invariant.  Its properties are
stated and proved in \cite[\S1]{F3}.  The techniques used there, particularly
around~(1.13), imply that the short exact sequence
  \begin{equation}\label{eq:55}
     0\longrightarrow
     H^4(B\Cx;\ZZ)\xrightarrow{\;\;i\;\;}E^4(B\Cx)\xrightarrow{\;\;j\;\;}
     H^2(B\Cx;\zt)\longrightarrow 0 
  \end{equation}
does not split.  Hence there is a unique class $\lambda \in E^4(B\Cx)$ such
that $2\lambda =i({c_1}^2)$ and $j(\lambda )=\overline{c_1}$, where
$\overline{c_1}$~ is the mod~2 reduction of the universal first Chern class
$c_1\in H^2(B\Cx;\ZZ)$.  Let $\dEs$ be the complex differential refinement
of~$E$, defined as a homotopy fiber product as in~\cite[(4.12)]{HS} (with
$\mathcal{V}=\CC$ concentrated in degree zero); see also~\cite[\S4.4]{BNV}
(with $C=\CC$ concentrated in degree zero).  The differential cohomology
group fits into an exact sequence
  \begin{equation}\label{eq:57}
     0\longrightarrow \dE^4(\BNG)\longrightarrow E^4(B\Cx)\;\times \;\Omega
     ^4_{\textnormal{cl}}(\BNG;\CC) \xrightarrow{\;\;-\;\;} H^4(B\Cx;\CC),
  \end{equation}
part of the Mayer-Vietoris sequence derived from the homotopy fiber
product.\footnote{The short exact sequences one usually derives from it for
smooth manifolds depend on the de Rham theorem, which does not necessarily
hold for~$\BNGG$ if $G$~is noncompact.}  Here $\Omega
^4_{\textnormal{cl}}(\BNG;\CC)$ is the vector space of closed complex
differential forms; by the main theorem in~\cite{FH} it is isomorphic to the
three-dimensional complex vector space of symmetric functions $\CC\times
\CC\to\CC$ which are real bilinear.  The final map in~\eqref{eq:57} is the
difference between the homomorphism $k\:E^4(B\Cx)\to H^4(B\Cx;\CC)$ defined
in~\cite[(1.4)]{F3} and the Chern-Weil homomorphism.  It follows that the
generator $\lambda \in E^4(B\Cx)$ has a unique differential refinement
$\cla\in \dE^4(\BNG)$ with associated symmetric function
$(z_1,z_2)\longmapsto -z_1z_2/8\pi ^2$.  This is the universal class which
defines spin Chern-Simons theory.  The constructions which follow use a
geometric representative of this class, say a \emph{differential
$E$-function} as defined in \cite[\S4.1]{HS}.  Its ``curvature'', the image
under the homomorphism
  \begin{equation}\label{eq:58}
     \dE^4(\BNG)\longrightarrow \Omega ^4_{\textnormal{cl}}(\BNG;\CC) ,
  \end{equation}
is 
  \begin{equation}\label{eq:59}
     \wsp{\Tu} = -\frac{1}{8\pi ^2}\,\Omega (\Tu)\wedge \Omega (\Tu), 
  \end{equation}
where $\Tu\in \Omega ^1(\ENG;\CC)$ is the universal $\Cx$-connection
\cite[(5.25)]{FH} on the universal $\Cx$-bundle 
  \begin{equation}\label{eq:60}
     \pi \:\ENG\longrightarrow \BNG. 
  \end{equation}
Recall \cite[Example~5.14]{FH} that $\ENG$~is the classifying sheaf for
triples~$(p,\Theta ,s)$ consisting of a principal $\Cx$-bundle $p\:P\to M$
with connection~$\Theta $ and section~$s$.  The pullback of~\eqref{eq:60}
by~$\pi $ is canonically trivialized by the section, and this induces a
``nonflat trivialization''\footnote{An explicit model for a nonflat
trivialization of a geometric representative of an $\widecheck{E}$-cohomology
class---a \emph{coned differential $E$-function}---is spelled out in
\cite[Definition~5.12]{F3}.  The data of a 3-form, such as~\eqref{eq:61}, is
part of that definition.\label{foot:2}} of~$\pi ^*\cla$.  The trivialization
represents~$\pi ^*\cla$ by a complex 3-form, the universal Chern-Simons form
  \begin{equation}\label{eq:61}
     \asp{\Tu} = -\frac{1}{8\pi ^2}\, \Tu\wedge \pi^*\Omega (\Tu) 
  \end{equation}
whose de Rham differential is~\eqref{eq:59}.  Below in~\eqref{eq:68} we give
a formula for this trivialization as an integral of~$\pi ^*\cla$.

Turning now to a family $P\xrightarrow{\;\pi \;}M_\sigma
\xrightarrow{\;p\;}S$ with $\Cx$-connection~$\Theta $, as in~\eqref{eq:s59},
we obtain from these universal constructions: a differential class (rather, a
geometric representative) $\cla(\Theta )\in \dE^4(M)$ with
curvature~$\wsp{\Theta }$, as in~\eqref{eq:59}; the Chern-Simons form
$\asp\Theta \in \Omega ^3(P;\CC)$; and an isomorphism of~$\pi ^*\cla(\Theta
)$ with the image of~$\asp{\Theta }$ in~$\dE^4(P)$.  The spin Chern-Simons
invariant is the pushforward in differential $E$-theory: 
  \begin{equation}\label{eq:62}
     \thS{M_\sigma /S}\Theta =\int_{M_\sigma /S}\cla(\Theta ). 
  \end{equation}
Integration in~$E$, and so in~$\dEs$, uses the spin structure~$\sigma $;
see~\cite[(1.6)]{F3}.  If the fibers of~$p$ are closed of dimension~$k$, then
\eqref{eq:62} lives in~$\dE^{4-k}(S)$.  In low dimensions we identify
  \begin{equation}\label{eq:63}
     \begin{aligned} \dE^1(S)&\cong \Map(S,\Cx) \\ \dE^2(S)&\cong \bigl\{
      \textnormal{iso classes of complex super line bundles $\sL\to
      S$ with connection} \bigr\} \end{aligned} 
  \end{equation}
 
Parts (i), (v), and (vi) of Theorem~\ref{thm:s20} follow immediately from the
foregoing and the compatibility of the exact sequence~\eqref{eq:57} and
integration.  The $\zt$-grading in~(v) is determined by its restriction to
each~$s\in S$, and since the grading is a discrete invariant it only depends
on $\lambda (\Theta )\in E^4(M)$.  Writing $M_s=p\inv (s)$, the desired
formula follows from the commutative diagram 
  \begin{equation}\label{eq:81}
     \begin{gathered} \xymatrix{E^4(M_s)\ar[r]^<<<<<{j} \ar[d]_{p_*} &
     H^2(M_s;\zt)\ar[d]^{p_*} \\ E^2\bigl(\{s\} \bigr)\ar[r]^<<<<<{j} &
     H^0\bigl(\{s\};\zt \bigr) } \end{gathered} 
  \end{equation}
in which both maps~$j$ are isomorphisms and $j\bigl(\lambda (\Theta )
\bigr)=\overline{c_1}(P)$.  The dependence on spin structure in~(ii) is an
immediate consequence of the proof of \cite[Proposition~4.4]{F3}.  For
parts~(iii) and~(iv) use the notion of a nonflat trivialization of a
geometric representative of an $\dEs$-cohomology class; see
footnote~\footref{foot:2}.  If the fibers of $p\:M\to S$ are compact
manifolds with boundary, a Stokes' theorem holds (see \cite[\S3.4]{HS} for
one version):
  \begin{equation}\label{eq:64}
  \begin{gathered}
      \textnormal{$\int_{M_\sigma /S}\cla(\Theta )$ is a nonflat
     trivialization of $\int_{\partial M_\sigma /S}\cla(\Theta )$}\\\textnormal{ with
     ``covariant derivative'' $\int_{M/S}\wsp{\Theta }$.} 
  \end{gathered}
  \end{equation}
For $\dim(p)=3$ the nonflat trivialization is a nonzero section of a
(necessarily even) complex line bundle and (iii)~follows.  For~(iv) we use
the isomorphism of~$\cla(\Theta )$ with the image of~$s^*\asp\Theta $
in~$\dE^4(M)$ and apply ~\eqref{eq:64}.

We turn to Theorem~\ref{thm:s20}(vii), and for that we prove a formula valid
for any finite dimensional real Lie group~$G$ with finitely many components.
Analogous to~\eqref{eq:57} is the exact sequence 
  \begin{equation}\label{eq:65}
     0\longrightarrow \dE^4(\BNGG)\longrightarrow E^4(BG)\;\oplus \;\Omega
     ^4_{\textnormal{cl}}(\BNGG) \xrightarrow{\;\;-\;\;} H^4(BG;\CC), 
  \end{equation}
and the main theorem in~\cite{FH} identifies~$\Omega
^4_{\textnormal{cl}}(\BNGG)$ with the complex vector space of $G$-invariant
symmetric bilinear forms $\mathfrak{g}\times \mathfrak{g}\to\CC$ on the Lie
algebra of~$G$.  It follows that $\dE^4(\BNGG)$ is isomorphic to the group of
compatible pairs $\cla=\bigl(\lambda ,\langle -,- \rangle \bigr)$ of a class
in~$E^4(BG)$ and a symmetric bilinear form.  Fix such a pair.  Let $\pi
\:\ENGG\to\BNGG$ be the universal $G$-bundle with universal $G$-connection
$\Tu\in \Omega ^1(\ENGG;\mathfrak{g})$.  The total space~$\sF$ of the fiber
product of~$\pi $ with itself classifies quartets $(p,\Theta ,s_0,s_1)$
consisting of a principal $G$-bundle $P\to M$ over a smooth manifold~$M$
equipped with connection~$\Theta $ and sections~$s_0,s_1$.  Use~$s_0$ to
trivialize the (universal) bundle, and so construct an isomorphism
  \begin{equation}\label{eq:66}
     \sF\xrightarrow{\;\;\cong \;\;}\ENGG\times G. 
  \end{equation}
The sections~$s_0,s_1$ each induce a trivialization of the pullback of~$\cla$
to~$\sF$; the difference of these trivializations represents an element
$\ce\in \dE^3(\sF)$.

  \begin{proposition}[]\label{thm:19}
 Under the isomorphism~\eqref{eq:66} the differential class~$\ce$ is the sum 
  \begin{equation}\label{eq:67}
     \ce=\co \;+\;\langle \Tu\wedge \theta \rangle, 
  \end{equation}
where $\co\in \dE^3(G)$ is the integral of~$\cla(\tT)$ over~$\cir$ for a
certain $G$-connection~$\tT$ over~$\cir\times G$, and $\theta \in \Omega
^1_G(\mathfrak{g})$ is the Maurer-Cartan form.
  \end{proposition}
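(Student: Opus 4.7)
The plan is to identify the class $\ce$ by combining a Stokes-type computation with an explicit homotopy of sections, and to decompose the resulting integral along the Ad-invariant splitting provided by the bilinear form $\langle -, -\rangle$. First I would make $\co$ concrete: choose any interpolation $\tT$ on $[0,1]\times G$ between the trivial $G$-connection at $t=0$ and its pullback by right translation $r_g$ at $t=1$, glued along $\{0,1\}\times G$ to descend to a connection over $\cir\times G$ whose holonomy around $\cir\times \{g\}$ is $g$. Define
\begin{equation}
\co := \int_{\cir/\pt}\cla(\tT)\;\in\;\dE^3(G),
\end{equation}
independent of the interpolation modulo exact terms by Stokes' theorem in differential $E$-cohomology.

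Next I would exploit the universality of $\sF$. Under the identification $\sF\cong \ENGG\times G$, the sections $s_0,s_1$ classified by $\sF$ differ by the tautological map $g\colon \sF\to G$. Choose a homotopy of sections $s_t=s_0\cdot \gamma_t$ on $[0,1]\times \sF$, obtained by pulling back a path from $e$ to $g$ in $G$ (well-defined up to homotopy, which suffices once independence is verified). The corresponding family of connections satisfies
\begin{equation}
\Theta_t = \Ad_{\gamma_t\inv}\Tu + \gamma_t^*\theta,
\end{equation}
and applying the nonflat-trivialization Stokes' theorem~\eqref{eq:64} in $\dEs$-cohomology, the difference of the two trivializations of $\pi^*\cla$ induced by $s_0$ and $s_1$ is precisely the integral $\int_{[0,1]}\cla(\Theta_t)$, which by construction is $\ce$.

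Then I would decompose this integral. Using the transgression formula and $\Ad$-invariance of $\langle-,-\rangle$, the curvature $\wsp{\Theta_t}=-\frac{1}{8\pi^2}\langle \Omega(\Theta_t)\wedge \Omega(\Theta_t)\rangle$ separates under $\int_{[0,1]}$ into (a)~a piece built purely from $\gamma_t^*\theta$, whose fiber integral is the Chern-Weil representative of $\co$ pulled back from $G$, and (b)~a mixed piece involving both $\Tu$ and $\gamma_t^*\theta$, whose fiber integral is the 2-form $\langle \Tu\wedge \theta\rangle$, viewed as the connection datum on a topologically trivial class in $\dE^3(\sF)$. These are exactly the two terms on the right of~\eqref{eq:67}.

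The main obstacle will be promoting this de Rham-level equality to a genuine equality in $\dE^3(\sF)$, since differential cohomology classes carry finer information than their Chern-Weil form. The curvatures agree by the computation above; the underlying $E^3$-classes agree because both reduce to the pullback by $g\colon \sF\to G$ of $\int_\cir \lambda \in E^3(G)$ (the mixed term contributes only to the curvature, being a globally defined 2-form). To pin down the remaining flat-class ambiguity, I would restrict to $g=e\in G$, where $s_0=s_1$ forces $\ce=0$ while $\co$ and $\langle \Tu\wedge \theta\rangle$ vanish by construction, and then invoke connectedness of $G$ together with the agreement of curvature and topological class to conclude the identity globally.
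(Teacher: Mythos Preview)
Your overall shape---Stokes' theorem applied to a simplex of connections, followed by a splitting into a ``pure~$G$'' piece and a mixed piece---matches the paper's strategy, but your implementation has two genuine gaps.

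First, the homotopy $\gamma_t\colon\sF\to G$ from the constant map~$e$ to the tautological map $g\colon\sF\cong\ENGG\times G\to G$ does not exist in general. Such a homotopy exists if and only if~$g$ is nullhomotopic, equivalently if and only if $G$ is contractible; for the case of interest $G=\Cx$ it certainly fails. Your parenthetical ``well-defined up to homotopy, which suffices once independence is verified'' does not help: there is nothing to vary $\gamma_t$ within if no such $\gamma_t$ exists. The paper avoids this obstruction by working not with a path of sections but with the canonical affine $2$-simplex $\Delta^2\to\conn\rho$ spanned by the three \emph{connections} $\Theta_{s_0},\Theta_{s_1},\rho^*\varpi^*\Tu$. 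Affine interpolation of connections is always available, no topology of~$G$ enters, and Stokes on this $2$-simplex gives
\[
\ce \;=\; \int_{\Theta_{s_0}}^{\Theta_{s_1}}\cla \;-\; \int_{\Delta^2}\langle\Omega(\tT)\wedge\Omega(\tT)\rangle,
\]
with the first term identified as~$\co$ after gauge-identifying the two flat vertices, and the second computed explicitly in barycentric coordinates to be $\langle\Tu\wedge\theta\rangle$. Note the split is geometric (boundary edge versus interior), not the algebraic splitting of the bilinear form you propose.

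Second, even granting a de~Rham-level identity, your final ``pinning down'' step is invalid. Two classes in $\dE^3(\sF)$ with the same curvature and the same underlying $E^3$-class differ by a flat class, and flat classes are not determined by their restriction to a point together with connectedness of~$G$: the relevant torsor is governed by cohomology of~$\sF$ in degree~$2$, which is nontrivial here. The paper's argument never faces this ambiguity because the $2$-simplex Stokes identity is an equality of geometric representatives (coned differential $E$-functions), not merely of their shadows.
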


\noindent 
 $\tT$ is a universal pointed connection: its holonomy around $\cir\times
\{h\}$ equals\footnote{The signs work out so that the holonomy
is~$h\inv $, not~$h$.}~$h\inv $ for all~$h\in G$.

  \begin{proof}
 The pullback of the universal $G$-bundle $\pi \:\ENGG\to\BNGG$ via~$\pi $ is
a principal $G$-bundle $\varpi \:\sF\to\ENGG$ equipped with a canonical
section~$s$.  The latter induces the canonical trivialization~\eqref{eq:66}
as well as a canonical trivial connection~$\Theta _s$, which
under~\eqref{eq:66} maps to the Maurer-Cartan form~$\theta $.  Let $\Delta
^1\to \conn\varpi $ be the affine map of the 1-simplex into the affine space
of connections on~$\varpi $ which sends the endpoints to~$\Theta _s$ and~$\pi
^*\Tu$.  There results a connection~$\tT_s$ on the trivial $G$-bundle over
$\Delta ^1\times \ENGG$.  By Stokes' theorem~\eqref{eq:64} the nonflat
trivialization of~$\pi ^*\cla(\Tu)$ is
  \begin{equation}\label{eq:68}
     \int_{\Delta ^1}\cla(\tT_s) = \int_{\Theta _s}^{\pi ^*\Tu}\cla.
  \end{equation}
(The right hand side is shorthand notation for the left hand side.)  The
universal Chern-Simons form~\eqref{eq:61} is the same integral with the
integrand replaced by its curvature $\langle \Omega (\tT_s)\wedge \Omega
(\tT_s) \rangle$.

 \begin{figure}[h]
  \centering
  \includegraphics[scale=.6]{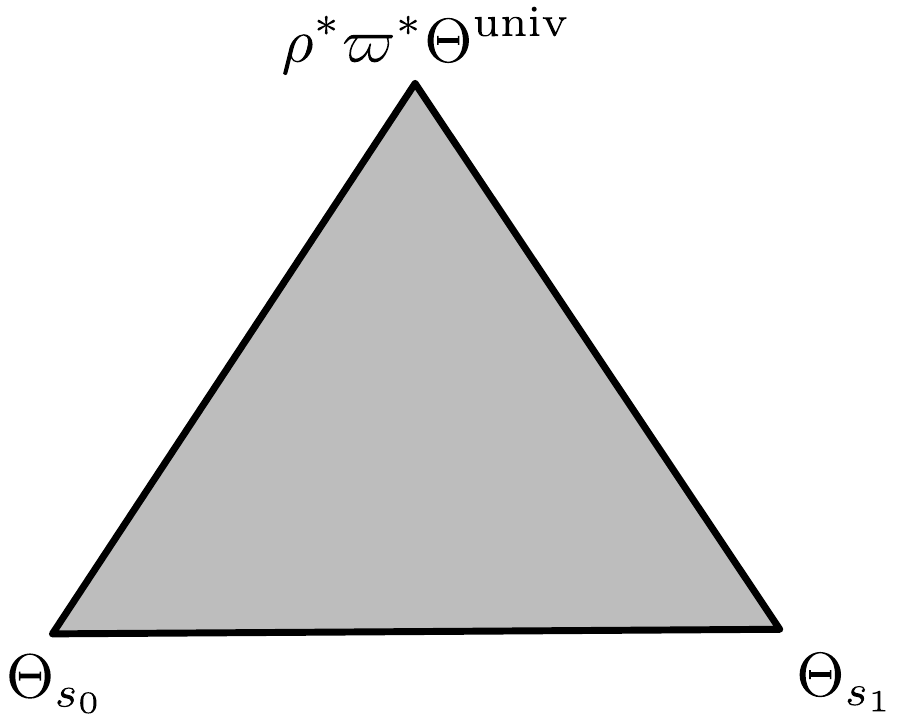}
  \vskip -.5pc
  \caption{The 2-simplex of connections $\Delta ^2\to\conn\rho $}\label{fig:3}
  \end{figure}
 
Working now on the universal bundle $\rho \:\sP\to\sF$, which is equipped
with two sections~$s_0$ and $s_1$, there is an affine map $\Delta
^2\to\conn\rho $ as depicted in Figure~\ref{fig:3}.  The class~$\ce$ is
obtained by integrating~$\cla$ over the path which begins at the lower left
vertex, moves to the top vertex, and then down to the lower right vertex; we
use a gauge transformation to identify the trivial connections at the initial
and final vertices.  A version of Stokes' theorem identifies the difference
of the trivializations of the pullback of~$\cla$ to~$\sF$ induced by~$s_1$
and~$s_0$ as
  \begin{equation}\label{eq:69}
     \ce = \int_{\Theta _{s_0}}^{\Theta _{s_1}}\cla \;-\; \int_{\Delta
     ^2}\langle \Omega (\tT)\wedge \Omega (\tT) \rangle, 
  \end{equation}
where $\tT$~is the $G$-connection over~$\Delta ^2\times \sF$ constructed by
affine interpolation from the vertices in Figure~\ref{fig:3}.  As before, we
identify the connections~$\Theta _{s_0}$ and~$\Theta _{s_1}$ by a gauge
transformation.   In terms of barycentric coordinates on~$\Delta ^2$, the
map $\Delta ^2\to\conn\rho $ is
  \begin{equation}\label{eq:70}
     (t^0,t^1,t^2)\longmapsto t^0\Theta _{s_0} + t^1\Theta _{s_1} + t^2\rho
     ^*\varpi ^*\Tu. 
  \end{equation}
Write $s_1=s_0\cdot h$ for $h\:\sF\to G$.  Relative to the
trivialization~$s_0$ and isomorphism~\eqref{eq:66} we rewrite\footnote{Since
$s_1^*\Theta _{s_1}=0$, the usual formula for the gauge transform of a
connection implies $0=\theta +\Ad_{h\inv }(s_0^*\Theta \mstrut
_{s_1})$.}~\eqref{eq:70} as a map to $\Omega ^1_{\ENGG\times
G}(\mathfrak{g})$:
  \begin{equation}\label{eq:71}
     (t^0,t^1,t^2)\longmapsto -t^1\Ad_h\theta \,+\, t^2\Tu. 
  \end{equation}
The last expression, interpreted as an element of~$\Omega ^1_{\Delta ^2\times
\ENGG\times G}(\mathfrak{g})$, is precisely~$\tT$.  Its curvature is 
  \begin{equation}\label{eq:72}
     \Omega (\tT) = -dt^1\wedge \Ad_h\theta \;+\; dt^2\wedge \Tu \;+\;
     \textnormal{terms not involving $dt^1$ or $dt^2$}, 
  \end{equation}
from which 
  \begin{equation}\label{eq:73}
     -\int_{\Delta ^2}\langle \Omega (\tT)\wedge \Omega (\tT) \rangle =
     \langle \Tu\wedge \theta \rangle. 
  \end{equation}

To identify the first term in~\eqref{eq:69} in terms of a $G$-connection
over~$\cir\times G$, restrict~\eqref{eq:71} to the 1-simplex $t^2=0$ to
obtain the connection form $-t^1\Ad_h\theta $.  Compare
with~\cite[(4.14)]{F1}.\footnote{Set $g=1$ in~\cite[(4.14)]{F1} to compare;
the sign discrepancy is explained by the appearance of the inverse `$h\inv $'
in the gluing formula~\cite[(4.13)]{F1}.}
  \end{proof}

Apply Proposition~\ref{thm:19} with $G=\Cx$ to Theorem~\ref{thm:s20}(vii).
First, the universal $\Cx$-bundle $Q\to\scx$ has first Chern class a
generator of $H^2(\scx;\ZZ)$ (as follows from~\eqref{eq:22}, for example), so
from~\eqref{eq:55} and~\eqref{eq:57}---applied to~$\scx$---we see that
$\cla(Q)$~is the generator of $\dE^4(\scx)\cong H^2(\scx;\zt)\cong \zt$.  Its
integral over~$\cir$ is the nonzero element
  \begin{equation}\label{eq:74}
     \co\in \dE^3(\Cx)\cong H^1(\Cx;\zt)\cong \zt. 
  \end{equation}

Turning to Theorem~\ref{thm:s20}(vii), the integrand in~\eqref{eq:s64} equals
the second term of~\eqref{eq:67}, so it remains to identify the
sign~\eqref{eq:s65} with the integral of the first term in~\eqref{eq:67}.
Since the curvature of~$\co$ vanishes, it suffices to compute the pushforward
of its image $\omega \in E^3(\Cx)$.  Since $E$~is a truncation of connective
$ko$-theory, there is an isomorphism 
  \begin{equation}\label{eq:75}
     E^3(\Cx) \cong ko^{-1}(\Cx)\cong ko^{-2}(\Cx;\RZ)\cong \zt .
  \end{equation}
Set $\beta \in ko^{-2}(\Cx;\RZ)$ the image of~$\omega $ under~\eqref{eq:75}.
 
Let $M$~be a closed 2-manifold with spin structure~$\sigma $ and a map
$h\:M\to\Cx$.  Our task is to equate~\eqref{eq:s65} with $\pi ^M_*h^*\beta
\in ko^{-4}(\pt;\RZ)\cong \RZ$, where $\pi ^M\:M\to\pt$.  First, deformation
retract~$\Cx$ to the circle group~$\TT\subset \Cx$ and so homotope~$h$ to a
map with image~$\TT$.  Then the generator ~$\beta \in ko^{-2}(\TT;\RZ)$ is
pushed forward from the generator $\alpha \in ko^{-3}(\pt;\RZ)\cong \zt$ via
the inclusion $e\:\pt\hookrightarrow \TT$.  By a further homotopy make $h$~
transverse to~$e\in \TT$; then its inverse image $S\subset M$ is a finite
union of disjoint embedded circles, and $S$~ inherits a spin structure
from~$M$.  Arguing from the diagram
  \begin{equation}\label{eq:d100}
     \begin{gathered} \xymatrix@C+4ex@R+4ex{S\ar[r]^{i} \ar[d]_{q} &
     M\ar[d]^{h}\ar[dl]|{{\pi ^M}_{\ }^{\ }} \\ 
     \pt\ar[r]^{e} & \TT} \end{gathered} 
  \end{equation}
we have
  \begin{equation}\label{eq:d101}
 \pi ^M_*h^*\beta =\pi ^M_*h^*e_*\alpha =\pi ^M_*i_*q^*\alpha  = q_*a^*\alpha
=\alpha \cdot q_*(1). 
  \end{equation}
Restricted to a component of~$S$, the pushforward $q_*(1)\in
KO^{-1}(\pt)\cong \zt$ is~0 or~1 according as the spin structure on the
component bounds or not.  Since the homology class of~$S$ is Poincar\'e dual
to~$[h]\in H^1(M;\ZZ)$, we conclude that $q_*(1)$ maps to $\sigma \bigl([h]
\bigr)$ under the isomorphism $KO\inv (\pt)\cong \zt$.  It remains to observe
that multiplication induces a nonzero pairing
  \begin{equation}\label{eq:d102}
     KO^{-3}(\pt;\RZ)\otimes KO^{-1}(\pt)\longrightarrow KO^{-4}(\pt;\RZ) ,
  \end{equation}
as proved for example in~\cite[(B.10)]{FMS}.

The functoriality properties in Theorem~\ref{thm:17} follow
from~\eqref{eq:62} and the functoriality of the differential characteristic
class $\cla\in \dE^4(\BNG)$.

Finally, we sketch a proof of~\eqref{eq:12}.  Let $\cco\in \dH^2(\BNG)$ be
the universal differential first Chern class for principal $\Cx$-bundles; it
is the differential lift of~$c_1\in H^2(B\Cx;\ZZ)$ with associated linear
function $z\mapsto \sqmo z/2\pi $, $z\in \CC$.  Its square $\cco\cdot \cco\in
\dH^4(\BNG)$ is the universal Chern-Simons class.  Let $\Theta $~be a flat
connection on a principal $\Cx$-bundle $\pi \:P\to X$, where $X$~is a closed
oriented 3-manifold.  Then since $\cco(\Theta )$~is flat, the product
$\cco(\Theta )\cdot \cco(\Theta )$ is computed by a cup product in cohomology
with $\CZ$~coefficients, precisely as in~\eqref{eq:12}.

   \section{A motivating Euler-Lagrange equation}\label{sec:7}
 
In our construction (\S\ref{subsec:4.1}) of the dilogarithm, we
introduce~\eqref{eq:n15} the submanifold $\MTp\subset \MT$ of flat
$\Cx$-connections on the torus $T=\RR^2/\ZZ^2$ such that the holonomies
around the standard cycles sum to one.  This condition arises naturally in
the stratified abelianization of flat $\SL_2\CC$-connections~\cite{FN}.  In
this appendix we briefly indicate a formal computation motivated by the
topological string~\cite{W}, \cite{OV} that produces this condition
on holonomies.
 
Let $M$~be a closed spin 3-manifold and $S\subset M$ an oriented embedded
circle.  For $\alpha \in \Omega ^1_M(\CC)$ a connection on the trivial
$\Cx$-bundle over~$M$, introduce 
  \begin{equation}\label{eq:77}
     F(\alpha ) = -\frac{1}{8\pi ^2}\int_{M}\alpha \wedge d\alpha
     \;-\;\frac{1}{4\pi ^2}\Li_2\left[ \exp\left( -\int_{S}\alpha \right)
     \right] . 
  \end{equation}
In this expression $\Li_2$~is the Spence dilogarithm~\eqref{eq:31} evaluated
at the holonomy of~$\alpha $ about~$S$.  Since $\Li_2$~is not a global
function on~$\CC$---see~\eqref{eq:33}---this is ill-defined, so our
computation based on~\eqref{eq:77} is heuristic.    The first term
in~\eqref{eq:77} is the spin Chern-Simons invariant (see~\eqref{eq:61}), and
the normalization of the second term matches that of the first: we should
view~$F$ as defined modulo integers.  The differential of~$F$ is
  \begin{equation}\label{eq:78}
     dF_\alpha (\da) = -\frac{1}{4\pi ^2}\int_{M}\da\wedge \bigl[ d\alpha +
     \log(1-z)\delta _S\bigr] ,\qquad \da\in \Omega ^1_M(\CC), 
  \end{equation}
where $z=\exp(-\int_{S}\alpha )$ is the holonomy of~$\alpha $ about~$S$ and
$\delta _S$~is the distributional 2-form Poincar\'e dual to~$S$.  The
critical point (Euler-Lagrange) equation is
  \begin{equation}\label{eq:79}
     d\alpha =-\log(1-z)\delta _S. 
  \end{equation}
A critical~$\alpha $ is flat on the complement of~$S$ and the holonomy around
a small loop linking~$S$ is $\exp\bigl\{-\bigl[-\log(1-z) \bigr] \bigr\}=1-z$.
Therefore, on the torus boundary of a tubular neighborhood of~$S$, a
critical connection~$\alpha $ is flat and the sum of holonomies about
generating cycles is one. 

  \begin{remark}[]\label{thm:20} 
 We can define~$F$ on a cover of the space of connections whose holonomy~$z$
about~$S$ is not equal to one, namely the cover on which we choose logarithms
for~$z$ and~$1-z$.  But the critical point equation~\eqref{eq:79} takes us to
a space of singular connections.  In particular, the holonomy about~$S$ is no
longer defined.  To rectify this, we can from the beginning choose an
embedding of~$[0,1)\times \cir\hookrightarrow M$ such that the image
of~$\{0\}\times \cir$ is~$S$, and then replace the argument of~$\Li_2$
in~\eqref{eq:77} with the limit of holonomies around loops $\{t\}\times \cir$
as~$t\to0$.  This produces a basis of the first homology of the torus
boundary of a tubular neighborhood of~$S$, so pins down the cycles on which
the holonomies sum to one. 
  \end{remark}

  \begin{remark}[]\label{thm:21}  
 One possible origin of the action~\eqref{eq:77} is as follows.  We suppose
$M$ is a Lagrangian submanifold inside a Calabi-Yau threefold $X$, and we
view $(M, \alpha)$ as defining a Lagrangian boundary condition in the A type
topological string theory of maps $\varphi: (\Sigma, \partial \Sigma) \to
(X,M)$.  Then $F(\alpha)$ ~can be interpreted as the target space effective
action of that string theory. This prediction comes from combining~\cite{W}
and~\cite{OV}: in \cite[(4.50)]{W} it was explained that the effective action
should be the Chern-Simons action plus additional contributions from
holomorphic maps (if any), and in \cite[(3.22)]{OV} it was shown that the
contribution from an isolated holomorphic disc $\varphi$ is $\Li_2(e^{-x}) /
4 \pi^2$, where $x = \int_{\partial D} \phi^* \alpha$.
  \end{remark}

\newpage
  \bigskip\bigskip

\providecommand{\bysame}{\leavevmode\hbox to3em{\hrulefill}\thinspace}
\providecommand{\MR}{\relax\ifhmode\unskip\space\fi MR }
\providecommand{\MRhref}[2]{%
  \href{http://www.ams.org/mathscinet-getitem?mr=#1}{#2}
}
\providecommand{\href}[2]{#2}

  \end{document}